\renewcommand{\thefootnote}{\fnsymbol{footnote}}
\renewcommand{\le}{\leqslant}
\renewcommand{\ge}{\geqslant}
\DeclareMathOperator{\ch}{ch}
	\newtheorem{theorem}{Theorem}%[section]
	\newtheorem{corollary}[theorem]{Corollary}
	\newtheorem{lemma}[theorem]{Lemma}
	\newtheorem{conjecture}{Conjecture}
	\theoremstyle{definition}
	\newtheorem{definition}{Definition}
	\newtheorem{claim}{Claim}
\begin{document}
\title{\Large \bf  Refined list  version of Hadwiger's Conjecture}
\author{%
Yangyan Gu\footnotemark[3] \and 
Yiting Jiang\footnotemark[3] \and 
David R.~Wood\footnotemark[2] \and 
Xuding Zhu\footnotemark[3]}
\date{}
\maketitle

\footnotetext[0]{\today}

\footnotetext[3]{College of Mathematics and Computer Sciences, Zhejiang Normal University, China (\texttt{\{ytjiang,
yangyan,xdzhu\}@zjnu.edu.cn}).}

\footnotetext[2]{School of Mathematics, Monash   University, Melbourne, Australia  (\texttt{david.wood@monash.edu}). Research supported by the Australian Research Council.}
		
\renewcommand{\thefootnote}{\arabic{footnote}}

\begin{abstract}
Assume $\lambda=\{k_1,k_2, \ldots, k_q\}$ is a partition of $k_{\lambda} = \sum_{i=1}^q k_i$. A  $\lambda$-list assignment of $G$ is a $k_\lambda$-list assignment $L$ of $G$ such that the colour set $\bigcup_{v \in V(G)}L(v)$ can be partitioned into $|\lambda|= q$ sets $C_1,C_2,\ldots,C_q$ such that for each $i$ and each vertex $v$ of $G$, $|L(v) \cap C_i| \ge k_i$. We say $G$ is \emph{$\lambda$-choosable} if $G$ is $L$-colourable for any $\lambda$-list assignment $L$ of $G$. The concept of $\lambda$-choosability is a refinement of   choosability that puts $k$-choosability and $k$-colourability in the same framework. If $|\lambda|$ is close  to $k_\lambda$, then $\lambda$-choosability is close  to $k_\lambda$-colourability; if $|\lambda|$ is close  to $1$, then $\lambda$-choosability is close  to $k_\lambda$-choosability. This paper studies Hadwiger's Conjecture in the context of $\lambda$-choosability. Hadwiger's Conjecture is equivalent to saying that every $K_t$-minor-free graph is $\{1 \star (t-1)\}$-choosable for any positive integer $t$. We prove that for   $t \ge 5$, for any partition $\lambda$ of $t-1$ other than $\{1 \star (t-1)\}$, there is a $K_t$-minor-free graph $G$ that is not $\lambda$-choosable. We then construct several types of $K_t$-minor-free graphs that are not $\lambda$-choosable,  where $k_\lambda - (t-1)$ gets larger as  $k_\lambda-|\lambda|$ gets larger.  In partcular,   for any $q$ and any $\epsilon > 0$, there exists $t_0$ such that for any $t \ge t_0$, for any partition $\lambda$ of $\lfloor (2-\epsilon)t \rfloor$ with $|\lambda| =q$,  there is a $K_t$-minor-free graph that is not $\lambda$-choosable. 
The $q=1$ case of this result was recently proved by Steiner, and our proof uses a similar argument.   We also generalize this result to $(a,b)$-list colouring. \bigskip
		
\noindent {\bf Keywords:}
Hadwiger's Conjecture, $\lambda$-choosablity, $(a,b)$-list colouring. \end{abstract}

\section{Introduction}

Given graphs $H$ and $G$, we say $H$ is {\em a minor} of $G$ (or $G$ has {\em an $H$-minor}) if a graph isomorphic to $H$ can be obtained from a subgraph of $G$ by contracting edges. Let $K_t$ be the $t$-vertex complete graph. A graph $G$ is {\em $K_t$-minor-free} if $G$ has no $K_t$-minor.  In 1943,  Hadwiger~\cite{Had} conjectured the following upper bound on the chromatic number of $K_t$-minor-free graphs:
		
\begin{conjecture}[Hadwiger's Conjecture]
\label{conj}
For every integer $t \ge 1$, every $K_t$-minor-free graph is $(t-1)$-colourable.
\end{conjecture}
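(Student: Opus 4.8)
The plan is to prove \Cref{conj} by strong induction on $t$. The base cases $t\le 6$ are classical: for $t\le 4$ this is elementary, $t=5$ is equivalent to the Four Colour Theorem by Wagner's reformulation, and $t=6$ is the theorem of Robertson, Seymour and Thomas. So fix $t\ge 7$, assume the statement for all smaller values of $t$, and suppose it fails for $t$; let $G$ be a counterexample with $|V(G)|$ minimum and, subject to that, $|E(G)|$ maximum. Then $G$ is edge-maximal $K_t$-minor-free and, since every proper subgraph is $(t-1)$-colourable by minimality, $\chi(G)=t$ and $G$ is vertex-critical, so $\delta(G)\ge t-1$. Moreover $G$ has no clique cutset: a clique cutset $C$ (necessarily of order at most $t-1$, as $K_t\not\subseteq G$) would split $G$ into two smaller parts $G_1,G_2$, each $(t-1)$-colourable by minimality of $|V(G)|$, and one could permute the colours on $G_2$ so that the two colourings agree on $C$ and then combine them --- a contradiction.

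Next I would invoke the Graph Minor Structure Theorem of Robertson and Seymour: there are constants $a=a(t)$, $g=g(t)$, $p=p(t)$, $d=d(t)$ such that $G$ is a clique-sum, over cliques of order at most $t-1$, of graphs each of which becomes embeddable in a surface of Euler genus at most $g$ after deleting at most $a$ apex vertices and removing the interiors of at most $p$ vortex discs, each vortex having depth at most $d$. The ``permute and combine'' argument from the first paragraph shows that $(t-1)$-colourability is preserved under clique-sums over cliques of order at most $t-1$, and each piece of the decomposition is $K_t$-minor-free; hence if the decomposition is non-trivial the pieces are smaller than $G$ and $(t-1)$-colourable by minimality, giving a contradiction, so we may assume $G$ itself is $a$-almost-embeddable. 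Now delete the apex set $A$, $|A|\le a$, and give each vertex of $A$ a private colour. Cutting $G-A$ open along its at most $p$ vortex discs leaves an honest surface graph, which is $\mathrm{H}(g)$-colourable by the Heawood bound (here $\mathrm{H}(g)=\lfloor(7+\sqrt{1+24g}\,)/2\rfloor$), together with $p$ graphs of path-width $O(d)$; re-colouring each vortex so that it extends the surface colouring then shows that $G-A$, and hence $G$, is $c$-colourable with $c=a+\mathrm{H}(g)+O(pd)$.

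The main obstacle --- and the reason \Cref{conj} is still open --- is entirely quantitative. In every known form of the Graph Minor Structure Theorem, including the explicit versions, the parameters $a(t),g(t),p(t),d(t)$ grow vastly faster than $t$, so the bound $c=a+\mathrm{H}(g)+O(pd)$ is astronomically larger than $t-1$. To reach $t-1$ one would need either (i) a dramatic sharpening of the quantitative structure theorem --- in particular Euler genus $g(t)=O(t^2)$, so that $\mathrm{H}(g(t))<t-1$, together with $a(t),p(t),d(t)=O(t)$ --- which is widely believed to be too strong to hold, or (ii) a genuinely global colouring argument that processes the entire clique-sum decomposition at once, exploiting that a vertex-critical $K_t$-minor-free graph of chromatic number $t$ (the only obstruction, by the first paragraph) cannot afford many ``independent'' dense pieces. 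I do not know how to carry out either route. The strongest unconditional results in this direction are the extremal bounds of Kostochka and Thomason, which give average degree $O(t\sqrt{\log t})$ for $K_t$-minor-free graphs, refined by Norin, Postle and Song and by Delcourt and Postle to the current record that every $K_t$-minor-free graph is $O(t\log\log t)$-colourable; \Cref{conj} itself remains open, and it is precisely this state of affairs that motivates the present paper to study instead the $\lambda$-choosability refinement of Hadwiger's Conjecture, where, as we show, strong negative results can indeed be obtained.
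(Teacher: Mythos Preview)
The statement you are asked to prove is Hadwiger's Conjecture, which the paper explicitly records as an open conjecture, not as a theorem; the paper offers no proof of it, only a summary of the known cases $t\le 6$ and the current best asymptotic upper bounds. Your proposal is therefore not being compared against any proof in the paper --- there is none --- and, to your credit, your write-up is itself not a proof but an honest discussion that ends by acknowledging ``\Cref{conj} itself remains open''. In that sense the proposal is consistent with the paper's position.

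That said, as a \emph{proof} the proposal has a clear and self-declared gap: the structure-theorem route you outline yields only $\chi(G)\le a(t)+\mathrm{H}(g(t))+O(p(t)d(t))$, and with any known bounds on the Robertson--Seymour parameters this is far larger than $t-1$. You correctly identify this as the obstruction and do not claim to overcome it. So the proposal should not be read as a proof attempt at all, but as background motivating the paper's actual contributions. If the intent was to supply a proof of \Cref{conj}, none is given; if the intent was to explain why the paper turns instead to $\lambda$-choosability, then the discussion is reasonable but redundant with the introduction and should be trimmed or removed rather than labelled as a proof.
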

		
This conjecture is a deep generalization of the Four Colour Theorem,   and has motivated many developments in graph colouring and graph minor theory. Hadwiger \cite{Had} and Dirac \cite{Dir} independently showed that Hadwiger's Conjecture  holds for $t \le  4$. Wagner \cite{Wan} proved that for $t=5$ the conjecture is equivalent to the Four Colour Theorem, which was subsequently proved by Appel, Haken and  Koch~\cite{AH,AHK} and Robertson, Sanders, Seymour and Thomas~\cite{RSST}, both using extensive computer assistance. Robertson, Seymour and Thomas \cite{RST} went one step further and proved Hadwiger's Conjecture for $t=6$, also by reducing it to the Four Colour Theorem. The conjecture for $t \ge 7$ is open and seems  to be extremely challenging. For more on Hadwiger's Conjecture, see the survey of Seymour~\cite{Sey}.
		
The evident difficulty of Hadwiger's Conjecture has inspired many researchers to study the following natural weakening (cf.\cite{RS1998,Kaw,KM}): 

\begin{conjecture}[Linear Hadwiger's Conjecture]
\label{conj1}
There exists a constant $C>0$ such that for every integer $t\ge1$,  every $K_t$-minor-free graph is $Ct$-colourable.
\end{conjecture}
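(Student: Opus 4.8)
Since Linear Hadwiger's Conjecture is a major open problem, what follows is a line of attack rather than a complete argument. The natural first move is to use sparsity: by the Kostochka--Thomason theorem, a graph with no $K_t$-minor has average degree $O(t\sqrt{\log t})$, and as this bound is inherited by every subgraph, such a graph is $O(t\sqrt{\log t})$-degenerate and hence $O(t\sqrt{\log t})$-colourable by a greedy argument. This already proves a ``$Ct\sqrt{\log t}$'' version of the conjecture, so the whole difficulty is to shave the $\sqrt{\log t}$ factor --- precisely the gap that has resisted attack since the 1980s.

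To get below what degeneracy gives, I would pass to the fractional relaxation, where the linear bound is already known: by the theorem of Reed and Seymour, every $K_t$-minor-free graph has fractional chromatic number $O(t)$, essentially via the Duchet--Meyniel bound $\alpha(G)\,(2h(G)-1)\ge |V(G)|$ on the independence number. The plan is then to convert such a fractional $O(t)$-colouring into an honest $O(t)$-colouring. The obstruction to doing this directly is that rounding a fractional colouring costs a $\log n$ factor in general, which is useless because $K_t$-minor-free graphs can be arbitrarily large. So the crucial intermediate step is a reduction to graphs of bounded order: show that a minimal counterexample contains no large, highly connected, dense piece --- otherwise one contracts it to obtain either a $K_t$-minor or a smaller counterexample --- so that it suffices to $O(t)$-colour $K_t$-minor-free graphs on at most $g(t)$ vertices, where the rounding loss shrinks to the harmless $O(\log g(t))$. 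One would then finish off the bounded-order case by a direct argument that combines the Duchet--Meyniel independent-set bound with a charging scheme preventing the additive error from compounding across the recursion.

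The step I expect to be the main obstacle is exactly the removal of this last logarithmic factor. The Kostochka--Thomason density bound is tight --- random graphs of the corresponding edge density have no $K_t$-minor --- so degeneracy alone can never beat $t\sqrt{\log t}$, and every structural handle known (balanced separators, dense-spot contractions, iterated removal of independent sets) still seems to lose at least a $\log\log t$ factor, which is why, despite much recent progress, the record for the colouring bound stands at $O(t\log\log t)$ while the linear bound remains open. A realistic intermediate target, and the one I would actually aim for, is to reprove the $O(t\log\log t)$ bound by a cleaner route, and to settle the fully linear bound for restricted classes --- graphs of bounded Euler genus, bounded treewidth, or graphs excluding a fixed small minor --- where the sparsity is genuinely $O(t)$ and the fractional-to-integral conversion can be carried out with only bounded loss.
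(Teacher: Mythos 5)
You have not proved the statement, and indeed no proof should be expected here: Conjecture~\ref{conj1} is stated in the paper as an open conjecture, with no proof supplied. The paper's own role for it is as context --- it records the history ($O(t\sqrt{\log t})$ by Kostochka and Thomason, the breakthrough of Norine, Postle and Song, and the current record $O(t\log\log t)$ of Delcourt and Postle) and then proves results in the \emph{opposite} direction, namely constructions of $K_t$-minor-free graphs that are not $\lambda$-choosable, showing how far list-type relaxations of Hadwiger's Conjecture can fail. So there is no ``paper proof'' to compare your argument against, and your own opening sentence correctly concedes that what you offer is a programme, not an argument.

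As a programme, your sketch is a fair summary of the known partial results, but the gap is exactly where you locate it, and it is the whole problem: the conversion of the Reed--Seymour fractional $2t$ bound (or any sparsity/separator-based structure) into an integral $O(t)$-colouring with only bounded loss is precisely what is open. Your proposed reduction to bounded-order graphs is not justified --- you give no argument that a minimal counterexample has at most $g(t)$ vertices, and no such reduction is known; the recent progress (Postle, Delcourt--Postle) does reduce colouring to ``small'' graphs in a technical sense, but even there the loss is $\Theta(\log\log t)$, not $O(1)$. Two smaller inaccuracies: Reed and Seymour prove $\chi_f(G)\le 2t$ for $K_t$-minor-free $G$ by an argument genuinely beyond iterating the Duchet--Meyniel independence bound (that iteration alone only yields integral bounds with a $\log n$ loss, as you note), and the greedy/degeneracy bound from Kostochka--Thomason being tight for random graphs shows only that \emph{density} arguments stall at $t\sqrt{\log t}$, not that the conjecture needs a bounded-order reduction. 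In short: the statement remains open, your proposal contains no new idea that closes the known $\log\log t$ gap, and the realistic targets you name at the end (bounded genus, bounded treewidth) are already settled cases rather than stepping stones the paper pursues.
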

		
For many decades, the best general upper bound on the chromatic number of   $K_t$-minor-free graphs was  $O(t\sqrt{\log t})$, which was proved independently   by Kostochka~\cite{Kos82,Kos84} and Thomason~\cite{Tho} in the 1980s. In 2019,  Norine, Postle and Song \cite{NSP}  broke this barrier, and proved  that the maximum chromatic number of $K_t$-minor-free graphs is in $O(t(\log t) ^{1/4+o(1)})$. Following a series of improvements~\cite{NP,Pos1,Pos2,Pos3}, the best known bound is $O(t\log \log t)$ due to Delcourt and Postle~\cite{DP}.
		
A list assignment of a graph $G$ is a mapping $L$ that assigns to each vertex $v$ of $G$ a set $L(v)$ of permissible colours. An {\em $L$-colouring} of $G$ is a proper colouring $f$ of $G$ such that for each vertex $v$ of $G$, $f(v)\in L(v)$. We say $G$ is {\em $L$-colourable} if $G$ has an $L$-colouring. A {\em $k $-list assignment} of $G$ is a list assignment $L$ with $|L(v)| \ge k$ for each vertex $v$. We say $G$ is {\em $k$-choosable} if $G$ is $L$-colourable for any $k$-list assignment $L$ of $G$. The {\em choice-number} of $G$ is the minimum integer $k$ such that $G$ is $k$-choosable.
		    
Hadwiger's Conjecture is also widely considered in the setting of list colourings. Voigt~\cite{Voi} constructed planar graphs (hence $K_5$-minor-free) with choice-number $5$. Hence the list version of Hadwiger's Conjecture is false. Nevertheless,  the list version of Linear Hadwiger's Conjecture,    proposed by Kawarabayashi and Mohar \cite{KM} in 2007, remains open.
		
\begin{conjecture}[List Hadwiger's Conjecture]
\label{conj2}
There exists a constant $C>0$ such that for every integer $t\ge1$,  every $K_t$-minor-free graph is $Ct$-choosable.
\end{conjecture}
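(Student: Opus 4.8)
The final statement, the List Hadwiger's Conjecture (Conjecture~\ref{conj2}), is a well-known open problem, so what follows is a proposal for the line of attack that currently looks most plausible, together with the point at which it stalls. The plan is to follow the density-and-decomposition paradigm behind recent progress on the colouring version. One starts from the easy observation that a $K_t$-minor-free graph has at most $O(t\sqrt{\log t})\cdot n$ edges, hence is $O(t\sqrt{\log t})$-degenerate, and since degeneracy bounds the choice-number this already gives $\ch(G)=O(t\sqrt{\log t})$ for every $K_t$-minor-free $G$. To do better one cannot argue purely locally, because there are $K_t$-minor-free graphs (pseudorandom ones) with minimum degree $\Omega(t\sqrt{\log t})$; the argument must exploit global structure while staying compatible with arbitrary lists.

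The steps I would carry out are: (1) a density-increment dichotomy -- either $G$ has a vertex of degree $O(t)$, which can be deleted and coloured last from its surviving list, or $G$ is dense everywhere, in which case after removing a sparse part it becomes highly connected; (2) in the dense, highly connected case, use an isoperimetric or well-linked-set argument to find a separator of order $O(t)$ with small hidden constant, split $G$ along it, recurse on the two sides, and reconcile the two colourings across the separator by a Hall-type matching on the residual lists; (3) bottom out either at sparse pieces, which are low-degeneracy and hence directly $O(t)$-choosable, or at structurally simple pieces (bounded genus, a bounded number of apex vertices, bounded-width vortices), where one colours the few apex vertices first and then applies a choosability bound for nearly-embedded graphs. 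The eventual constant $C$ is controlled by how many colours must be reserved at each separator and at each apex.

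The main obstacle -- and the reason even the colouring version is open in the linear regime -- lies in steps (2) and (3): every known structural decomposition of $K_t$-minor-free graphs, whether through small separators or through the Robertson--Seymour structure theorem, leaks a multiplicative $\operatorname{polylog} t$ factor somewhere (in vortex width, in adhesion size, or in the number of apices), and for list colouring the apex vertices are especially punishing, because unlike in ordinary colouring one cannot recycle a colour already used elsewhere in the graph. Converting that $\operatorname{polylog} t$ loss into an $O(1)$ loss, while keeping the separator-and-patching step valid for genuine \emph{lists} rather than a single shared palette, is exactly the crux. As a consistency check on any candidate argument, the constructions in this paper (generalising Steiner's) force $C\ge2$, so a correct proof must be delicate enough to land in the range $2\le C=O(1)$ rather than collapsing to $C=1$, which is already known to be false.
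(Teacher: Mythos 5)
You have correctly recognised that Conjecture~\ref{conj2} is an open problem: the paper does not prove it (nor claim to), so there is no proof of the paper's to compare your proposal against. The paper's only contact with this conjecture is in the opposite direction -- lower bounds on the constant $C$: it recalls that Voigt's planar examples kill $C=1$, that Bar\'at--Joret--Wood give $C\ge\frac43$, that Steiner's construction gives $C\ge 2$, and its own Theorems~\ref{thm:kq} and~\ref{thm-1} extend that $2t-o(t)$ lower bound to $\lambda$-choosability and to $(a,b)$-list colouring. On the upper-bound side it only cites the state of the art, $O(t(\log\log t)^2)$ by Delcourt and Postle, which already beats the $O(t\sqrt{\log t})$ degeneracy bound you start from.

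Your sketch is an honest research programme rather than a proof, and you have located the genuine obstruction yourself: every known decomposition of $K_t$-minor-free graphs (small separators, density increments, or the graph-minor structure theorem) loses at least a polylogarithmic factor somewhere, and the separator-patching and apex-handling steps are precisely where list colouring is harder than colouring, since colours used elsewhere cannot be recycled against arbitrary lists. So step (2)--(3) of your plan is not a gap you could fill with more care in the write-up; it is the open problem itself. Two small calibration points: the linear bound is not even known for ordinary chromatic number (Conjecture~\ref{conj1} is also open), so any argument that does not distinguish lists from a single palette would already be a breakthrough there; and your consistency check is right -- by Steiner's bound (and Theorem~\ref{thm:kq} with $q=1$) any successful argument must yield a constant $C\ge 2$.
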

		
The current state-of-the-art upper bound on the choice-number of $K_t$-minor-free graphs is $O(t(\log \log t)^2)$  \cite{DP}.  
  
If Conjecture~\ref{conj2} is true, then a natural problem is to determine the minimum value of $C$. Bar\'at, Joret and Wood~\cite{BJW} constructed $K_t$-minor-free graphs that are not $4(t-3)/3$-choosable, implying $C \ge \frac43$ in Conjecture~\ref{conj2}. Improving upon this result, Steiner~\cite{Ste} recently proved that the maximum choice-number of $K_t$-minor-free graphs is at least $2t-o(t)$, and hence $C \ge 2$ in Conjecture~\ref{conj2}.
  
\subsection{$\lambda$-Choosability}

In general, $k$-colourability and $k$-choosability behave very differently. Indeed, bipartite graphs can have arbitrary large choice-number. Zhu~\cite{Zhu} introduced a refinement of the concept of choosability, $\lambda$-choosability, that puts $k$-choosability and $k$-colourability in the same framework and considers a more complex hierarchy of colouring parameters.
		
\begin{definition}
Let $\lambda=\{k_1,k_2, \ldots, k_q\}$ be a multiset of positive integers. Let $k_{\lambda} = \sum_{i=1}^q k_i$ and $|\lambda| = q$. A {\em $\lambda$-list assignment} of $G$ is a list assignment $L$ such that the colour set $\bigcup_{v \in V(G)}L(v)$ can be partitioned into $q$ sets $C_1,C_2,\ldots,C_q$ such that for each $i$ and each vertex $v$ of $G$, $|L(v) \cap C_i| \ge k_i$.  We say $G$ is \emph{$\lambda$-choosable} if $G$ is $L$-colourable for any $\lambda$-list assignment $L$ of $G$.
\end{definition}
Note that for each vertex $v$, $|L(v)| \ge \sum_{i=1}^qk_i = k_{\lambda}$. So a $\lambda$-list assignment $L$ is a $k_{\lambda}$-list assignment with some restrictions on the set of possible lists.
		
		For a positive integer $a$, let $m_{\lambda}(a)$ be the multiplicity of $a$ in $\lambda$. If $m_{\lambda}(a)=m$, then instead of writing $m$ times the integer $a$, we write $a \star m$. For example, $\lambda=\{1 \star k_1, 2\star k_2,  3\}$ means that $\lambda$ is the multiset consisting of $k_1$ copies of $1$, $k_2$ copies of $2$ and one copy of $3$.
		If   $\lambda=\{k  \}$, then $\lambda$-choosability is the same as $k $-choosability; if  
		$\lambda = \{1 \star k\}$, then
		$\lambda$-choosability is equivalent to $k $-colourability. 
		So the concept of $\lambda$-choosability puts $k$-choosability and $k$-colourability in the same framework.

		For $\lambda = \{k_1,k_2,\ldots, k_q\}$ and $\lambda'= \{k'_1,k'_2,\ldots,k'_p\}$, we say $ \lambda'$ is a {\em refinement} of $\lambda$ if $p \ge q$ and there is a partition $I_1, I_2 , \ldots , I_q$ of $\{1,2,\ldots, p\}$ such that $\sum_{j \in I_t}k'_j = k_t$ for $t=1,2,\ldots, q$. We say $\lambda'$ is obtained from $\lambda$ by \emph{increasing} some parts if $p=q$ and $k_t \le k'_t$ for $t=1,2,\ldots, q$. We write $\lambda \le \lambda'$ if 
		$\lambda'$ is obtained from a refinement of $\lambda$ by increasing some parts. 
		It follows from the definitions that if $\lambda \le \lambda'$, then every $\lambda$-choosable graph is $\lambda'$-choosable. Conversely,  Zhu~\cite{Zhu} proved that 
		if $\lambda \not\le \lambda'$, then there is a $\lambda$-choosable graph that is not $\lambda'$-choosable.  
		In particular,  $\lambda$-choosability implies $k_{\lambda}$-colourability, and if $\lambda \ne \{1 \star k_{\lambda}\}$, then
		there are  $k_{\lambda}$-colourable graphs that are not $\lambda$-choosable.
		
		All the partitions $\lambda$ of a  positive integer $k$ are sandwiched between $\{k\}$ and $\{1 \star k\}$ in the above order. As observed above, $\{k\}$-choosability is the same as $k $-choosability, and $\{1 \star k\}$-choosability is equivalent to $k $-colourability. By considering other partitions $\lambda$ of $k$, $\lambda$-choosability provides a complex hierarchy of colouring parameters that interpolate between $k$-colourability and $k$-choosability. 
		
		The framework of $\lambda$-choosability provides room to explore strengthenings of colourability and choosability results.  For example, Kermnitz and Voigt~\cite{VoigtNOT112} proved that there are planar graphs that are not $\{1,1,2\}$-choosable. This result strengthens  Voigt's result that there are non-4-choosable planar graphs, and shows that the Four Colour Theorem is sharp in the sense that for any partition $\lambda$ of $4$ other than $\{1 \star 4\}$, there is a planar graph that is not $\lambda$-choosable.
  
		This paper considers Hadwiger's Conjecture in the context of $\lambda$-choosability.  Conjectures~\ref{conj}, \ref{conj1} and \ref{conj2} can be restated in the language of $\lambda$-choosability as follows:

		\medskip\noindent
		\textbf{Conjecture~\ref{conj}'.} \textit{For every integer $t \ge 1$,  every $K_t$-minor-free graph is $\{1\star (t-1)\}$-choosable.}
		
		\medskip\noindent
		\textbf{Conjecture~\ref{conj1}'.} \textit{There exists a constant $C>0$ such that for every integer $t\ge1$,  every $K_t$-minor-free graph is $\{1\star Ct\}$-choosable.}
		
		\medskip\noindent
		\textbf{Conjecture~\ref{conj2}'.} \textit{There exists a constant $C>0$ such that for every integer $t\ge 1$,  every $K_t$-minor-free graph is $\{Ct\}$-choosable.}
		
%		\medskip Assume $\lambda=\{k_1,k_2, \ldots, k_q\}$ is a multi-set of positive integers. Recall that $k_{\lambda} = \sum_{i=1}^q k_i$ and $|\lambda| = q$. If $|\lambda|$ is closer to $k_\lambda$, then $\lambda$-choosability is closer to $k_\lambda$-colourability; 
%		if $|\lambda|$ is smaller (closer to $1$), then $\lambda$-choosability is closer to $k_\lambda$-choosability.

\subsection{Results}
  
	This paper constructs several examples of $K_t$-minor-free graphs that are not $\lambda$-choosable where $k_\lambda\ge t-1$ and $q$ is close to $k_\lambda$. In particular, if   the multiplcity of 1 in $\lambda$ is large enough, then the number of parts of $\lambda$ will be close to $k_\lambda$.
		
  First we strengthen the above-mentioned  result  of Kermnitz and Voigt  to $K_t$-minor-free graphs for $t\ge 5$ as follows:
  
		\begin{restatable}{theorem}{thmI}
		\label{thm1}
		For every integer $t\ge 5$, there exists a $K_t$-minor-free graph that is not $\{1\star (t-3),2\}$-choosable.
		\end{restatable}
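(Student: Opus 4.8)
The plan is to build, for each $t\ge 5$, a $K_t$-minor-free graph $G_t$ together with a $\{1\star(t-3),2\}$-list assignment $L$ that admits no proper $L$-colouring. The natural strategy is to start from the Kermnitz--Voigt planar graph $G_5$ that is not $\{1,1,2\}$-choosable (this is the $t=5$ base case, and $\{1,1,2\}=\{1\star(5-3),2\}$), and then to inductively append an apex-type gadget that raises the Hadwiger number by exactly $1$ while consuming exactly one extra colour of ``type $1$'' in the list assignment. Concretely, given $G_{t-1}$ with a bad $\{1\star(t-4),2\}$-list assignment $L_{t-1}$ using colour classes $C_1,\dots,C_{t-4}$ (singletons) and $C_{t-3}$ (the size-$2$ class), I would form $G_t$ by adding a new vertex $u$ adjacent to every vertex of $G_{t-1}$, and set $L_t(v) = L_{t-1}(v)\cup\{c\}$ for $v\in V(G_{t-1})$ and $L_t(u)=\{c\}$ for a brand-new colour $c$, putting $c$ in a new singleton class $C_{t-2}$. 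Since $u$ must receive $c$, no vertex of $G_{t-1}$ can use $c$, so any $L_t$-colouring restricts to an $L_{t-1}$-colouring of $G_{t-1}$ — impossible. The list sizes are right: each $v\in V(G_{t-1})$ now has $t-2$ colours split as $(t-3)$ singletons plus one pair, i.e.\ a $\{1\star((t+1)-3-1),2\}=\{1\star(t-3),2\}$-assignment for $G_t$, wait — one checks $|\lambda|$ increases from $(t-4)+1=t-3$ parts to $t-2$ parts and $k_\lambda$ from $t-2$ to $t-1$, exactly matching $\{1\star((t+1)-3),2\}$ for the graph on the next value; indexing is routine.

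The one genuine thing to verify is the minor condition: I must show $G_t = G_{t-1}+K_1$ (join with an apex) is $K_t$-minor-free given that $G_{t-1}$ is $K_{t-1}$-minor-free. This is standard: a $K_t$-minor in $G_{t-1}+u$ uses at most one branch set containing $u$, and deleting that branch set (or $u$ from it) leaves a $K_{t-1}$-minor in $G_{t-1}$, a contradiction. So the apex construction is clean. The base case $G_5$ being non-$\{1,1,2\}$-choosable is exactly the Kermnitz--Voigt result cited in the excerpt, which I would invoke directly rather than reconstruct; alternatively one can reprove a small explicit $K_5$-minor-free (planar) example from scratch if a self-contained treatment is desired.

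The main obstacle is really just bookkeeping: making sure at each step that (i) the new colour $c$ is \emph{fresh}, so that forcing $u\mapsto c$ genuinely removes $c$ from play on all of $G_{t-1}$, and (ii) the partition of $\bigcup_v L_t(v)$ into classes still witnesses a $\{1\star(t-3),2\}$-assignment with the right number of singleton parts — i.e.\ that we add singletons, never enlarge the size-$2$ class. Both are immediate from the construction, so I expect no real difficulty; the proof should be short once the base case is quoted. If one wants to avoid relying on Kermnitz--Voigt, the harder alternative would be to design the size-$2$ class and the core gadget directly inside a single $K_t$-minor-free graph, but piggybacking on their planar example and iterating the apex join is by far the cleanest route.
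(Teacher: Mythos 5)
Your base case and your observation that adding an apex preserves $K_t$-minor-freeness are both fine, but the inductive step has a genuine gap, and it is precisely the point you dismiss as ``bookkeeping''. In a $\{1\star(t-3),2\}$-list assignment of $G_t$, \emph{every} vertex $v$ must satisfy $|L(v)\cap C_i|\ge k_i$ for every class $C_i$, so in particular your new apex $u$ must have a list of size at least $t-1$ containing two colours of the size-$2$ class; setting $L_t(u)=\{c\}$ is simply not a legal list in this framework. Once $u$ is given a legitimate list, say $\{c_1,\dots,c_{t-3},a,b\}$ with $a,b$ in the size-$2$ class, nothing forces $u$ to take the fresh colour: if $u$ is coloured $a$ or $b$, then the vertices of $G_{t-1}$ retain their enlarged lists $L_{t-1}(v)\cup\{c_{t-3}\}$ of size $t-1$, and there is no contradiction — such a graph may well be colourable. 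So the apex graph alone cannot witness non-$\{1\star(t-3),2\}$-choosability, and your construction does not go through as written.

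This is exactly the difficulty the paper's proof is built around. It forms $H_1=G_{t-1}+u$ with $L'(u)=\{c_1,\dots,c_{t-3},a,b\}$ and shows that precolouring $u$ with \emph{any} $c_i$, $1\le i\le t-3$, cannot be extended (for $i=t-3$ because $G_{t-1}$ is not $L$-colourable; for $i\le t-4$ one swaps $c_{t-3}$ back to $c_i$ in a hypothetical extension). It then introduces a second gadget: a triangle $H_2$ whose vertices all have list $\{c_1,\dots,c_{t-3},a,b\}$. Since only the two colours $a,b$ avoid $\{c_1,\dots,c_{t-3}\}$, every proper colouring of the triangle places some $c_i$ on some vertex, and for each colouring a copy of $H_1$ is glued there by identifying $u$ with that vertex (a $1$-clique-sum, so $K_t$-minor-freeness is preserved; this is Lemma~\ref{lem-key} together with Lemma~\ref{lem-3}). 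Your plan has no substitute for this forcing gadget, so it needs to be repaired along these lines. A secondary point: you tacitly treat $C_1,\dots,C_{t-4}$ as single colours shared by all vertices, whereas the inductive hypothesis only guarantees $|L(v)\cap C_i|\ge 1$ for each $v$; reducing to the case $C_i=\{c_i\}$ requires the normalization result of Zhu that the paper cites explicitly.
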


If $\lambda$ is a partition of  $t-1$ other than $\{1 \star (t-1)\}$, then $\{1\star (t-3),2\}$ is a refinement of $\lambda$. Hence we have the following corollary.

 \begin{corollary}
 \label{cor-tight1}
     If $\lambda$ is a partition of  $t-1$ other than $\{1 \star (t-1)\}$, then there is a $K_t$-minor-free graph that is not $\lambda$-choosable.
 \end{corollary}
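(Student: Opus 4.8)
The plan is to derive Corollary~\ref{cor-tight1} directly from Theorem~\ref{thm1} using the monotonicity of $\lambda$-choosability under the partial order $\le$. Recall from the discussion preceding the theorem that if $\lambda \le \lambda'$ then every $\lambda$-choosable graph is $\lambda'$-choosable, and that a refinement $\lambda'$ of $\lambda$ always satisfies $\lambda \le \lambda'$ (it is the special case of $\le$ with no parts increased). So it suffices to show that whenever $\lambda$ is a partition of $t-1$ with $\lambda \ne \{1\star(t-1)\}$, the partition $\{1\star(t-3),2\}$ is a refinement of $\lambda$; then the $K_t$-minor-free graph $G$ supplied by Theorem~\ref{thm1}, which fails to be $\{1\star(t-3),2\}$-choosable, cannot be $\lambda$-choosable either.

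To verify the refinement claim, write $\lambda = \{k_1,\ldots,k_q\}$. Since $\lambda \ne \{1\star(t-1)\}$, some part --- say $k_1$ --- satisfies $k_1 \ge 2$. Now group the $t-2$ parts of $\{1\star(t-3),2\}$ as follows: place the single part $2$ together with $k_1-2$ of the parts equal to $1$ into a block summing to $k_1$; and for each $i \ge 2$, place $k_i$ of the remaining parts equal to $1$ into a block summing to $k_i$. This uses $(k_1-2)+\sum_{i\ge 2}k_i = (t-1)-2 = t-3$ of the parts equal to $1$, that is, all of them, together with the single part $2$. These blocks exhibit $\{1\star(t-3),2\}$ as a refinement of $\lambda$, hence $\lambda \le \{1\star(t-3),2\}$.

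Finally, invoke Theorem~\ref{thm1} (which applies for $t \ge 5$) to obtain a $K_t$-minor-free graph $G$ that is not $\{1\star(t-3),2\}$-choosable. If $G$ were $\lambda$-choosable, then since $\lambda \le \{1\star(t-3),2\}$ it would be $\{1\star(t-3),2\}$-choosable, a contradiction; so $G$ is not $\lambda$-choosable. There is no genuine obstacle here: the argument is a one-line reduction once Theorem~\ref{thm1} is available, and the only points requiring care are the bookkeeping in the refinement (checking that exactly $t-3$ singleton parts are produced and that the block containing the part $2$ sums to $k_1$) and the implicit hypothesis $t \ge 5$ inherited from Theorem~\ref{thm1}.
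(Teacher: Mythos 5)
Your argument is correct and is essentially the paper's own proof: the paper likewise observes that $\{1\star(t-3),2\}$ is a refinement of any partition $\lambda$ of $t-1$ other than $\{1\star(t-1)\}$ and then applies Theorem~\ref{thm1} via the monotonicity of $\lambda$-choosability under $\le$. Your explicit verification of the refinement and the note about the implicit $t\ge 5$ hypothesis are fine additions but do not change the approach.
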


For a multiset $\lambda$ of positive integers, let $h(\lambda)$ be the maximum $t$ such that every $K_t$-minor-free graph  is $\lambda$-choosable. Since $K_{k_\lambda+1}$ is not $k_\lambda$-colourable and  hence not $\lambda$-choosable, we know that $h(\lambda) \le k_{\lambda}+1$. 

For a multiset $\lambda$ of positive integers, $k_\lambda - |\lambda|$ measures the ``distance" of $\lambda$-choosability from $k_{\lambda}$-colourability. 
Hadwiger's Conjecture says that if $k_{\lambda} - |\lambda| = 0$, then $h(\lambda) =k_{\lambda}+1$. By Theorem~\ref{thm1}, if $k_{\lambda}- |\lambda| \ge 1$, then $h(\lambda) \le k_\lambda$, provided that $k_{\lambda} \ge 5$. It seems natural that if $k_\lambda - |\lambda|$ gets bigger, then $k_\lambda - h(\lambda)$ also gets bigger, provided that $k_\lambda$ is sufficiently large. The next result shows this is true for various $\lambda$.
		
\begin{restatable}{theorem}{thmII}
\label{thm2}
For each integer $a\ge 0$, there exists an integer $t_1=t_1(a)$ such that for every integer $t\ge t_1$, there exists a $K_t$-minor-free graph that is not $\{1\star (t-2a-6) ,3a+6\}$-choosable.
\end{restatable}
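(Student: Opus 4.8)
The plan is to build the graph in two stages — a ``core'' construction in the spirit of Steiner~\cite{Ste}, followed by a clique--amplification that installs the many singleton parts of $\lambda$. Write $\lambda=\{1\star(t-2a-6),\,3a+6\}$, so that $k_\lambda=t+a$. Since $\lambda$ is a refinement of the one-part partition $\{t+a\}$ we have $\{t+a\}\le\lambda$, so ``$G$ is not $\lambda$-choosable'' is \emph{strictly stronger} than ``$G$ is not $(t+a)$-choosable''; hence Steiner's theorem cannot be quoted as a black box, and the point is that a Steiner-type construction, set up carefully, already produces a bad list assignment respecting the finer partition $\lambda$. A useful bookkeeping device is that every $K_t$-minor-free graph has Hadwiger number at most $t-1=k_\lambda-(a+1)$, so each intermediate gadget will be asked to have Hadwiger number at most its $k_\lambda$ minus $a+1$; all the operations below preserve this deficiency.

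First I would construct a core gadget $H=H_a$ that is not $\{1\star r,\,3a+6\}$-choosable for a suitable constant $r=r(a)$, with Hadwiger number at most $r+2a+5$ (equivalently, deficiency $a+1$). Following the Erd\H{o}s--Rubin--Taylor/Steiner template, $H$ is a bipartite-like graph: a ``source'' part whose vertices carry a family of lists over a bounded pool of colours, chosen so that many colours are necessarily used, completely joined to a ``sink'' part containing one vertex for each transversal of the source lists, whose whole list then gets blocked. Because the source part is essentially an independent set it contributes almost nothing to the Hadwiger number, and this is where the deficiency $a+1$ comes from; arranging the colour pool so that it splits into $r$ singleton classes together with one class of size $3a+6$ — rather than one undifferentiated class — is the step where Steiner's argument is refined into the $\lambda$-setting. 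It is convenient to first obtain $H$ with a smaller big part $d_0$ and then iterate the amplification: if $F$ is not $\{1\star p,\,d\}$-choosable then $K_1$ joined to a bounded number of disjoint copies of $F$ is not $\{1\star p,\,d+1\}$-choosable, with Hadwiger number one larger, by dedicating one copy of $F$ to each colour the apex may receive and using that deleting a singleton colour — or the new common colour — from a $\{1\star p,\,d+1\}$-list leaves (up to relabelling) a $\{1\star p,\,d\}$-list, so that some dedicated copy always inherits a bad $\{1\star p,\,d\}$-assignment. (For the core gadget to exist with the required Hadwiger-to-choosability ratio one needs $3a+6$, hence the scale, to be large; for small $a$ the constant $r(a)$ is taken correspondingly large, and $t_1(a)$ large enough to accommodate it.)

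For the second stage set $s:=t-2a-6-r\ge 0$ and let $G$ be the join of a clique $K_s$ with a disjoint union of many copies of $H$. Each clique vertex raises the Hadwiger number by exactly one and the disjoint union contributes nothing, so $h(G)\le s+h(H)\le s+(r+2a+5)=t-1$ and $G$ is $K_t$-minor-free. For the bad list assignment, take the $r$ singleton classes from the copies of $H$, $s$ further singleton classes $\{\gamma_1\},\dots,\{\gamma_s\}$, and one big class of size $3a+6$; give every clique vertex the list consisting of all $t-2a-6$ singleton colours together with a block of $3a+6$ fresh big-class colours, and give the copies of $H$ lists extending, copy by copy, bad $\{1\star r,\,3a+6\}$-assignments, with the copies' big-class colours and the clique's blocks interleaved so that whichever $s$ colours the clique uses, at least one copy of $H$ is left with (a relabelling of) one of its bad $\{1\star r,\,3a+6\}$-assignments. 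With enough copies this pigeonhole goes through and $G$ is not $\lambda$-choosable; setting $t_1(a):=r(a)+2a+6$ finishes the proof.

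The step I expect to be the main obstacle is the forcing in the second stage: an apex or clique vertex cannot simply be \emph{compelled} to spend a singleton colour, since it must also take $3a+6$ colours from the big class, and if those are private it steps aside harmlessly. The dedicated-copies device is meant to defeat precisely this, but making it work uniformly — in particular controlling the case where a clique vertex uses a big-class colour rather than a singleton colour, so that this does not merely hand the copies of $H$ an extra common colour (which would force $H$ to be not $\{1\star(r+1),3a+6\}$-choosable, and so on, circularly) — is where the Steiner-style bookkeeping does the real work, and is the part that must be carried out most carefully.
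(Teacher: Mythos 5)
There are two genuine gaps, and you flag the second one yourself. First, your ``core gadget'' $H_a$ --- a graph with Hadwiger number at most $r+2a+5$ that is not $\{1\star r,\,3a+6\}$-choosable --- is exactly an instance of the theorem being proved (take $t=r+2a+6$), so it cannot simply be posited; yet your sketch of it is where the proposal is thinnest. ``A source part completely joined to a sink part with one vertex per transversal'' does not explain how to keep the clique minor down to deficiency $a+1$ \emph{while} forcing a big colour class of size exactly $3a+6$ to be insufficient: with a complete join the sink vertices see all of the source, and then either the lists of the sink cannot be made to respect the partition (the forced colours from the source already exceed the big part), or the non-colourability argument collapses. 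The paper's gadget is emphatically not a complete join: $A$ is a $(2a+5)$-clique, $B$ is a $(t-2)$-clique, each $B$-vertex sees only $a+3$ vertices of $A$, and these neighbourhoods are spread uniformly over the $(a+3)$-subsets of $A$; the sparseness and the spreading are precisely what make the graph $K_t$-minor-free (any small branch set inside $A$ misses the entire block $B_X$ for some $(a+3)$-set $X$, so its neighbourhood is too small), and the restriction $|N_A(v)|=a+3$ is what makes each $B$-list meet the big class in only $\psi(N_A(v))$ plus $2a+3$ fresh colours, i.e.\ exactly $3a+6$, which is what kills the extension (the $B$-clique of size $t-2$ would have to be coloured from $t-3$ colours). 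None of this structure is present in your sketch, and ``$r(a)$ taken correspondingly large'' does not help when $a=0$ and the big part is the constant $6$.

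Second, the stage-two forcing is left open, as you concede: in the join of $K_s$ with copies of $H$, a clique vertex that takes a private big-class colour blocks nothing, and repairing this by giving the copies ``relabelled'' bad assignments runs into exactly the circularity you name (you would need $H$ to be non-choosable for a \emph{different} partition than the one you built it for). The paper avoids the join altogether. It fixes one partition $\mathcal{C}$, takes $H_2$ to be a $(t-1)$-clique whose list is \emph{only} the $t-2a-6$ singleton colours together with the same $3a+6$ big colours used on $A$ --- no private colours exist --- so by pigeonhole every proper colouring of $H_2$ puts big colours on some $(2a+5)$-subclique $K_\psi$, injectively; a fresh copy of the gadget is then glued onto $K_\psi$ by a clique-sum (Lemma~\ref{lem-key}, with the $\psi$-obstacle list $L_\psi$), one copy per colouring $\psi$, and clique-sums preserve $K_t$-minor-freeness (Lemma~\ref{lem-3}). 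So the correct fix is not ``enough copies and interleaving'' over a join, but eliminating the escape colours on the forcing clique and gluing along the subclique that is actually forced; as written, your argument does not close, and the closing step is the heart of the proof rather than routine bookkeeping.
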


For the $\lambda$ in Theorem~\ref{thm2}, $k_\lambda=t+a$, $h(\lambda) \le t-1$ and $|\lambda|=t-(2a+5)$. As $k_\lambda - |\lambda| = 3a+5$ tends to infinity, the difference $k_\lambda - h(\lambda) \ge a+1$ also tends to infinity, provided that $k_\lambda \ge \phi(k_\lambda - |\lambda|)$, where $\phi$ is a certain given function.
It remains open whether such a conclusion holds for all $\lambda$. We conjecture a positive answer.

\begin{conjecture}
\label{conj4}
There are functions $\phi, \psi: \mathbb{N} \to \mathbb{N}$  for which the following hold:
\begin{itemize}
    \item $\lim_{n \to \infty} \psi(n) = \infty$. %\lim_{n \to \infty} \phi(n) = \infty$.
    \item For any multiset $\lambda$ of positive integers, if $k_\lambda \ge \phi(k_\lambda - |\lambda|)$,  then $k_\lambda - h(\lambda) \ge \psi(k_\lambda - |\lambda|)$.
\end{itemize}
\end{conjecture}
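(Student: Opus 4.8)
The plan is to assemble $G$ from a large clique and a small ``hard core'', using clique-sums to keep the Hadwiger number just below $t$. Two elementary facts drive this: the class of $K_t$-minor-free graphs is closed under clique-sums, and $\operatorname{hadw}(K_m+H)=m+\operatorname{hadw}(H)$ for every graph $H$ (in a $K_\ell$-model of $K_m+H$ at most $m$ branch sets can meet the $m$ apex vertices, and the apex-free branch sets form a clique minor inside $H$). So if one can build a graph $G_0=G_0(a)$ of size bounded in terms of $a$ that is $K_r$-minor-free for some $r=r(a)$ and not $\{1\star b,3a+6\}$-choosable for some bounded $b=b(a)$, then taking $G=K_m+G_0$ with $m=t-2a-6-b$ gives $\operatorname{hadw}(G)=m+\operatorname{hadw}(G_0)$, and one wants $\operatorname{hadw}(G_0)=2a+5+b$ so that this equals $t-1$. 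The $t-2a-6$ singleton classes of $\lambda$ then serve to colour the apex clique $K_m$: I would design a $\lambda$-list assignment under which any proper colouring of $K_m$ uses up exactly the singleton colours, leaving each vertex of $G_0$ with precisely its intended bad list of effective size $3a+6$, together with the $b$ left-over singleton colours matching the $1\star b$ part.

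Designing the $\lambda$-list assignment is routine once $G_0$ and its bad list assignment are fixed: give every apex vertex and every core vertex a list of the minimum size $k_\lambda=t+a$ obeying the partition constraints, arranging that each core vertex's list contains all the singleton colours that the apex clique is meant to use. The one genuine subtlety --- common to all arguments of this type --- is that the apex clique might decline some singleton colours and use colours of the big class instead, potentially handing $G_0$ extra available colours and making it colourable. One rules this out by a short case analysis, using in an essential way that the core vertices have pairwise distinct lists on the relevant part, so that --- unlike the trivial ``all lists equal'' situation --- the residual instance on $G_0$ is a genuine list-colouring obstruction rather than a mere colouring obstruction; the latter would be hopeless, since a $K_r$-minor-free graph has chromatic number $O(r\log\log r)$ and in particular, for small $a$, at most $3a+6$.

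The crux is constructing the core $G_0$ itself. A single bipartite gadget $K_{k,k^k}$ with $k=3a+6$ will not do: it fails to be $k$-choosable but its Hadwiger number is $k+1$, so its ``list number to Hadwiger number'' ratio is only about $1$, whereas here one needs roughly $\tfrac32$. The core must therefore be an \emph{amplified} gadget whose list-colouring behaviour is much worse than its Hadwiger number permits. I would build it as a short tower of $K_{s,s^s}$-style gadgets glued along a common clique, so that the Hadwiger number of the tower is the \emph{maximum} over the levels rather than the sum, while the ``effective list size that can be forced to fail'' grows by a fixed amount per level --- and the $1\star b$ part of the partition is exactly what lets colours be recycled from one level to the next. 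For large $a$ this amplification can be read off from Steiner's construction, whose ratio already approaches $2>\tfrac32$; for the finitely many small values of $a$ one must instead exhibit a concrete $K_r$-minor-free graph that is not $\{1\star b,3a+6\}$-choosable, and checking $K_r$-minor-freeness is where most of the work lies.

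The main obstacle is precisely this last step: controlling the Hadwiger number of the amplified core at $\approx 2a+O(1)$ while its list parameter reaches $3a+6$, and handling the small-$a$ base cases, where Steiner's asymptotic bound is unavailable. Once $G_0$ is in hand, the remainder is bookkeeping: fix $t_1(a)$ large enough that $m\ge 1$ and all the inequalities above hold, take $G=K_m+G_0$ with the list assignment above, and verify that $G$ is $K_t$-minor-free and that it has no proper $L$-colouring because, whatever colours the apex clique uses, the residual list assignment on $G_0$ still contains the bad assignment and cannot be completed.
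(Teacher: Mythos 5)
You should first note that the statement you are trying to prove is Conjecture~\ref{conj4}, which the paper itself leaves open: the authors only establish it for special families of $\lambda$ (Theorems~\ref{thm2} and~\ref{thm3}) and explicitly observe that, by the refinement order, the conjecture reduces to partitions of the form $\{1\star k_1,2\star k_2\}$, which remains unresolved. Your proposal does not close this gap for two concrete reasons. First, your target family is $\{1\star b,\,3a+6\}$, i.e.\ partitions with one large part. Non-$\{1\star b,3a+6\}$-choosability does not imply non-$\lambda$-choosability for a general $\lambda$ with the same value of $k_\lambda-|\lambda|$: by the paper's monotonicity fact, to defeat $\lambda$ you must defeat a \emph{refinement} of $\lambda$, and a refinement of $\{1\star k_1,2\star k_2\}$ can only have parts of size $1$ and $2$. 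So even if your construction were complete it would recover (a version of) Theorem~\ref{thm2}, not the conjecture; the hard case, partitions built from parts of size $2$, is untouched, and it is exactly the case the paper cannot handle (Theorem~\ref{thm3} manages parts of size $3$ but not $2$). Second, the heart of your plan --- the ``amplified core'' $G_0$ with Hadwiger number about $2a+O(1)$ whose list behaviour fails at level $3a+6$ after recycling the singleton colours --- is stated as a goal, not constructed; you yourself identify it as ``the main obstacle.'' That construction \emph{is} the mathematical content here, so the proposal is a research plan rather than a proof.

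There is also a structural flaw in the assembly step. With a single apex join $G=K_m+G_0$ and one list assignment $L$, you cannot force the clique $K_m$ to consume precisely the intended singleton colours; a proper colouring may use colours from the large class on the apex vertices, and a ``short case analysis'' does not rule this out, because the number of admissible colourings of the clique is enormous and each one changes which residual lists reach $G_0$. This is precisely why the paper's Lemma~\ref{lem-key} (and Steiner's and the Bar\'at--Joret--Wood constructions) glue a \emph{separate} obstacle copy of the gadget onto $H_2$ for every proper colouring $\psi$ of $H_2$, with the obstacle list assignment $L_\psi$ tailored to $\psi$ --- the resulting graph is a large clique-sum, not a single join. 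Any repaired version of your argument would need this many-copies mechanism (or a genuinely new idea), in addition to the missing core construction and the extension from one large part to parts of size $2$.
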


It is easy to see that if  $k_\lambda - |\lambda| = b$, then $\{1 \star (k_\lambda-2b'), 2\star b' \}$ is a refinement of $\lambda$, where $b \ge b' \ge b/2$. Thus to prove Conjecture~\ref{conj4}, it suffices to prove it for $\lambda$ of the form $\{1 \star k_1, 2 \star k_2\}$.

Theorem~\ref{thm3} below shows that Conjecture~\ref{conj4} holds for any $\lambda$ of the form $\{1 \star k_1, 3 \star k_2\}$.

		\begin{restatable}{theorem}{thmIII}
		\label{thm3}
			For each integer $a\ge 0$, there exists an integer $t_2=t_2(a)$ such that for every integer $t\ge t_2$, there exists a $K_t$-minor-free graph that is not $\{1\star (t-5a-9),3\star (2a+3)\}$-choosable.
		\end{restatable}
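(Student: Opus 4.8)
The plan is to follow the template of Theorem~\ref{thm2} and of Steiner's construction~\cite{Ste}: exhibit a graph built from a \emph{choosability core}, responsible for the failure of colourability, together with a clique-type \emph{amplifier} that absorbs the many parts of size~$1$ while contributing only an additive amount to the Hadwiger number. Write
\[
m := t-5a-9, \qquad s := 2a+3, \qquad \lambda = \{1\star m,\ 3\star s\},
\]
so that $k_\lambda = m+3s = t+a$ and $|\lambda| = m+s$. The first task is to construct a graph $H$ and an explicit $\{3\star s\}$-list assignment $L_H$, with colour classes $C_1,\dots,C_s$ and $|L_H(u)\cap C_i| = 3$ for every vertex $u$ and every $i$, such that $H$ has no $L_H$-colouring; moreover the non-colourability must be \textbf{robust}, in the sense that it survives deleting a bounded number of colours from each $C_i$ while adding a bounded-size set of ``universal'' spare colours usable freely on all of $H$. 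Robustness of this form is exactly what prevents the amplifier from handing the colourer an escape.

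I would build $H$ as a \emph{linked} union of $s$ blocks, one per class: each block a small non-$3$-choosable gadget of bipartite type using only colours of $C_i$, and the blocks tied together through a shared low-Hadwiger-number skeleton (for instance a small clique forced to be rainbow-coloured, plus colour-propagation gadgets) so that no single colouring meets all blocks at once. The arithmetic is tuned so that each block is realisable with Hadwiger number an absolute constant, that $s=2a+3$ blocks raise the Hadwiger number by only $O(a)$, and --- this is where the constants $5a+9$ and $2a+3$ come from --- that the whole core $H$ can be built with $h(H)\le 5a+8$ while remaining robustly non-$\{3\star s\}$-colourable as above.

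Next set $G := K_m + H$, the join of a clique $K_m$ on $m$ new colours $d_1,\dots,d_m$ with $H$, and extend $L_H$ to a list assignment $L$ of $G$ by giving each clique vertex the list $\{d_1,\dots,d_m\}$ together with three colours from each $C_i$, and each vertex $u$ of $H$ the list $L_H(u)$ together with one new colour from each of the $m$ size-$1$ classes. One checks directly that $L$ is a $\{1\star m,\ 3\star s\}$-list assignment of $G$. In any $L$-colouring the clique receives $m$ distinct colours; deleting colours only makes $H$ harder to colour, and the colours of $\{d_1,\dots,d_m\}$ not used by the clique --- together with whatever $C_i$-colours the clique does consume --- play exactly the role of the bounded universal spare set in the robustness clause, so $H$ cannot be completed, a contradiction. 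Finally $h(G) = m + h(H) \le (t-5a-9)+(5a+8) = t-1$, since adding $m$ apex vertices increases the Hadwiger number by exactly $m$; thus $G$ is $K_t$-minor-free, provided $t \ge t_2(a)$ is large enough for the block sizes to fit and for $m\ge 1$.

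The main obstacle I anticipate lies inside the robustness requirement: the $t-5a-9$ parts of size~$1$ give the colourer $t-5a-9$ completely free private colours, and the core must be engineered so that none of these, and none of the colours the amplifier declines to use, can be recycled to rescue the colouring, while \emph{simultaneously} keeping $h(H)$ down to $5a+8$. Achieving both is why the core must be a carefully linked union of class-blocks --- a disjoint union would be too easy to colour, a single large clique would have too large a Hadwiger number --- and the bulk of the work will be verifying that the linking gadgets are at once rigid enough to force the failure and thin enough to keep the Hadwiger number within budget.
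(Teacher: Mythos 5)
There is a fatal gap in the amplifier step, and it is not the technicality you flag but a structural impossibility. In your construction $G=K_m+H$ with $m=t-5a-9$ and $s=2a+3$, every clique vertex must (by the definition of a $\lambda$-list assignment) carry at least $3$ colours from each of the $s$ classes $C_i$, and every vertex of $H$ must contain each singleton-class colour $d_j$. The adversary colouring the clique will therefore place up to $3s=6a+9$ class-colours on clique vertices (this is always possible once $m\ge 3s$, even if all clique vertices share the same triples), thereby leaving at least $6a+9$ of the colours $d_1,\dots,d_m$ unused on the clique; since $H$ is completely joined to $K_m$ and every vertex of $H$ lists all the $d_j$, these unused $d$-colours become genuinely universal spare colours on $H$. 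Hence if $\chi(H)\le 6a+9$ the adversary simply colours all of $H$ with spares and wins. So your ``robust core'' would have to satisfy $\chi(H)\ge 6a+10$ while having no $K_{5a+9}$-minor (you need $h(H)\le 5a+8$ to keep $G$ $K_t$-minor-free) --- that is, $H$ would be a counterexample to Hadwiger's Conjecture, and much stronger than the theorem you are trying to prove. No assembly of small non-$3$-choosable blocks can deliver this; the robustness clause is unattainable in the form your single fixed join requires.

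The paper avoids exactly this leak by not fixing one core against one amplifier colouring. Its gadget $H_1$ is not a bounded-size graph: it consists of a $3(a+2)$-clique $A$ and a $(t-a-3)$-clique $B$, with each $B$-vertex attached to a prescribed transversal-type subset of $A$; $K_t$-minor-freeness is Claim~\ref{clm3} (a counting argument exploiting the sparse $A$--$B$ adjacencies), and non-colourability is Claim~\ref{clm4}: once $A$ is coloured by a prescribed pattern $\psi$, the lists of the huge clique $B$ collectively contain only $t-a-4$ colours. Crucially, the obstacle list $L_\psi$ on $H_1$ depends on $\psi$, so instead of a join the paper uses Lemma~\ref{lem-key}: the amplifier is a $(t-1)$-clique $H_2$ whose lists contain only the $d$- and $b$-colours (so at least $5a+8$ of its vertices must take $d$-colours, and by pigeonhole $a+2$ complete triples appear), and for \emph{each} proper colouring $\psi$ of $H_2$ a fresh copy of $H_1$ with the tailored list $L_\psi$ is clique-summed onto the corresponding clique $K_\psi$. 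This per-colouring gluing is precisely what replaces the impossible robustness requirement in your plan; without it, or without some substitute idea playing the same role, your argument cannot be completed.
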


	%	In Theorems~\ref{thm2} and~\ref{thm3}, we construct $K_t$-minor-free graphs that are not $\lambda$-choosable with $k_\lambda=t+a$ for any given positive integer $a$, where $t=O( a\cdot 3^a)$. 

   As $|\lambda|$ becomes very small compared to $k_\lambda$, say $|\lambda|$ is constant and $k_\lambda$ tends to infinity, then    $\lambda$-choosability becomes very close to $k_\lambda$-choosability. The following result, which  generalizes the main result of Steiner~\cite{Ste}, deals with such $\lambda$. 
		
		\begin{restatable}{theorem}{thmkq}
		\label{thm:kq}
			For every $\varepsilon\in (0,1)$ and $q\in \mathbb{N}$, there exists an integer $t_3=t_3(q,\varepsilon)$ such that for every integer $t\ge t_3$ and $k_1,k_2,\dots,k_q\in \mathbb{N}$ satisfying
			$$  \sum_{j=1}^{q}k_j\le (2-\varepsilon)t,$$
			there exists a $K_t$-minor-free graph $G$ that is not $\{k_1,k_2,\dots,k_q\}$-choosable.
		\end{restatable}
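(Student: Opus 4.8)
\noindent\emph{Proof strategy.} The plan is to adapt Steiner's construction~\cite{Ste}, which gives the case $q=1$, keeping track of the colour classes of the partition so that the bad list assignment it produces is genuinely a $\{k_1,\dots,k_q\}$-list assignment rather than only a $k_\lambda$-list assignment. First I would reduce to the case $\sum_{j=1}^q k_j=m:=\lfloor(2-\varepsilon)t\rfloor$ exactly: if $\sum_j k_j\le(2-\varepsilon)t$, replacing $k_1$ by $k_1+(m-\sum_j k_j)$ only increases one part, so $\{k_1,\dots,k_q\}\le\{k_1',\dots,k_q'\}$, and since $\lambda\le\lambda'$ forces every $\lambda$-choosable graph to be $\lambda'$-choosable, a $K_t$-minor-free graph that is not $\{k_1',\dots,k_q'\}$-choosable is not $\{k_1,\dots,k_q\}$-choosable. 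After relabelling we may assume $k_1\ge\cdots\ge k_q\ge 1$. So it is enough to build, for each such partition and each $t\ge t_3(q,\varepsilon)$, a $K_t$-minor-free graph carrying a bad $\{k_1,\dots,k_q\}$-list assignment.

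For the graph, I would follow Steiner and assemble $G$ from many small gadgets — bipartite-type graphs each forcing colours on a small interface — glued along cliques so that $G$ is built by clique-sums from pieces that are individually $K_t$-minor-free. Since clique-sums of $K_t$-minor-free graphs are $K_t$-minor-free, this makes $G$ itself $K_t$-minor-free. The number of gadgets, the sizes of their interfaces and the sizes of the gadgets all have to be balanced against each other, and because Steiner's bound carries the tight constant $2$, this balance has essentially no room to spare; this is where the threshold $t\ge t_3(q,\varepsilon)$ enters.

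The list assignment uses a palette $C_1\sqcup\cdots\sqcup C_q$ with $|C_j|=n_j$ for parameters $n_j$ only slightly larger than $k_j$, giving every vertex exactly $k_j$ colours from each $C_j$, so $L$ is by construction a $\{k_1,\dots,k_q\}$-list assignment. The colours are spread over the gadgets so that, in any $L$-colouring, the colours still free on the interfaces shrink gadget by gadget — with this forcing run simultaneously inside each colour class $C_j$ — and a potential-function/counting argument shows that the total amount of play, roughly $\sum_j(n_j-k_j)$ across the classes, is used up before all gadgets have been processed, so $G$ has no $L$-colouring. Matching this budget against the number of gadgets that the minor bound allows is what makes $m$ as large as $(2-\varepsilon)t$ admissible, and recovers Steiner's statement when $q=1$.

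The step I expect to be the main obstacle is handling the $q$ colour classes simultaneously. A $\{k_1,\dots,k_q\}$-list assignment is a strict sub-family of the $m$-list assignments, so one has less freedom than in Steiner's argument, and the crux is to show that for fixed $q$ this loss costs only $o(t)$ in the achievable $m$, so that $(2-\varepsilon)t$ is still reached. In detail this means running the forcing in all $q$ classes at once and combining their budgets correctly when the $k_j$ are unequal; coping with parts equal to $1$, which give each vertex a single wildcard colour drawn from a large pool and thereby weaken the forcing; and re-checking that the clique-minor bound is untouched by the multi-palette lists. Fixing the gadget sizes, interface sizes and the $n_j$ as explicit functions of $q$, $\varepsilon$ and $t$ so that every inequality holds from a single threshold $t_3(q,\varepsilon)$ onward is where the bulk of the calculation sits.
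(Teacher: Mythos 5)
Your high-level direction coincides with the paper's (adapt Steiner's construction, reserving in each class $C_j$ a small buffer of $n_j-k_j$ extra colours), and your preliminary reduction to $\sum_j k_j=\lfloor(2-\varepsilon)t\rfloor$ via the order $\lambda\le\lambda'$ is correct, though not needed. But the proposal stops exactly where the proof has to happen: you yourself flag that ``the crux is to show that for fixed $q$ this loss costs only $o(t)$'' and that all the gadget/interface/palette sizes ``have to be balanced,'' without ever specifying the gadgets, the list assignment, or the counting. Two concrete ingredients are missing. First, the building block: the paper uses Steiner's probabilistic lemma (Lemma~\ref{lem-2}), a graph $H$ whose vertex set is two $n$-cliques $A,B$, in which every vertex has at most $\varepsilon'n$ non-neighbours and which has no $K_{\lceil(1+2\varepsilon')n\rceil}$-minor. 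This single object is the sole source of both $K_t$-minor-freeness and the constant $2$; your ``bipartite-type gadgets forcing colours on a small interface'' does not identify it, and in fact the interface $A$ has size $n=\Theta(t)$, not small. Second, and decisively, the device that defeats the list assignment: one glues onto the shared clique $A$ one copy $H_c$ of $H$ for \emph{every} injection $c$ of $A$ into $\bigcup_j X_j$ (where $|X_j|=k_j$), and on $B_c$ one deletes from the lists exactly the colours $c(x)$ of the $\le\varepsilon'n$ non-neighbours $x\in A$ of each vertex; the buffer sets $Y_j$ with $|Y_j|=\varepsilon'n$ restore $|L(y)\cap C_j|\ge k_j$, so $L$ really is a $\{k_1,\dots,k_q\}$-list assignment. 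Then, for any putative colouring, one looks only at the copy $H_c$ indexed by its restriction $c$ to $A$ and applies a one-shot pigeonhole: the total palette on $H_c$ has fewer than $2n-1$ colours while $H_c$ has $2n$ vertices covered by two cliques, so a colour repeats on a non-edge between $A$ and $B_c$, which the deletions forbid. Your described mechanism --- colours on interfaces shrinking ``gadget by gadget'' with a potential-function argument --- is not this argument, is not accompanied by any construction, and as written gives no reason why every $L$-colouring fails.

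On the quantitative point you defer: the loss from $q$ classes is $q\varepsilon'n$, which is linear in $t$, not $o(t)$; it is absorbed not by making it small relative to $t$ but by choosing $\varepsilon'=\varepsilon'(q,\varepsilon)$ so that $\frac{2-q\varepsilon'}{1+2\varepsilon'}\ge 2-\frac{\varepsilon}{2}$, after which $\sum_j k_j+q\varepsilon'n<2n-1$ holds for all $t\ge t_3(q,\varepsilon)$. With the construction above, parts $k_j=1$ need no special treatment, and the minor bound is untouched because the copies are glued along the clique $A$ (Lemma~\ref{lem-3}). So the gap is not a wrong overall idea but the absence of the actual construction and argument; the plan as stated could not be completed in the sequential form you sketch.
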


  The $q=1$ case of Theorem~\ref{thm:kq} was proved by Steiner~\cite{Ste}. 
		
\subsection{Fractional Colouring}
  
	Next we consider the fractional version of Hadwiger's Conjecture. A $b$-fold colouring of a graph $G$ is a mapping $\phi$ that assigns to each vertex $v$ of $G$ a set $\phi(v)$ of $b$ colours, so that adjacent vertices receive disjoint colour sets. An $(a, b)$-colouring of $G$ is a $b$-fold colouring $\phi$ of $G$ such that $\phi(v) \subseteq \{1, 2,\dots, a\}$ for each vertex $v\in V(G)$. The \emph{fractional chromatic number} of $G$ is
	$$\chi_f(G) := \inf \left\{\frac ab:G \text{ is }(a,b)\text{-colourable} \right\}.$$

  The fractional version of Hadwiger's Conjecture was studied by Reed and Seymour~\cite{RS1998}, who proved that every $K_t$-minor-free graph $G$ has fractional chromatic number at most $2t$.
  
	An $a$-list assignment of $G$ is a mapping $L$ that assigns to each vertex $v$ a set $L(v)$ of $a$ permissible colours. A $b$-fold $L$-colouring of $G$ is a $b$-fold colouring $\phi$ of $G$ such that $\phi(v) \subseteq L(v)$ for each vertex $v$. We say $G$ is \emph{$(a, b)$-choosable} if for any $a$-list assignment $L$ of $G$, there is a $b$-fold $L$-colouring of $G$. The \emph{fractional choice-number} of $G$ is
	$$\ch_f(G) := \inf \left\{\frac ab:G \text{ is }(a,b)\text{-choosable} \right\}.$$
		
	Alon, Tuza and Voigt~\cite{ATV} proved that for any graph $G$, if $G$ is $(a,b)$-colourable, then $G$ is $(am, bm)$-choosable for some integer $m$. So for any graph $G$, $\chi_f(G) = \ch_f(G)$, and moreover the infimum in the definition of $\ch_f(G)$ is attained and hence can be replaced by minimum. 

We prove the following result by an argument parallel to the proofs  in   \cite{Ste}.
		
\begin{restatable}{theorem}{thmmult}
\label{thm-1}
Let $\varepsilon\in (0,1)$ be fixed. For every positive integer $m$, there exists $t_0=t_0(\varepsilon)$ such that for every integer $t\ge t_0$ there exists a $K_t$-minor-free graph $G$ that is not $((2-\varepsilon)tm,m)$-choosable.
\end{restatable}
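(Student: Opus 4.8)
The plan is to follow the construction and argument of Steiner~\cite{Ste} that settles the $m=1$ case (equivalently, the $q=1$ instance of Theorem~\ref{thm:kq}), carried out directly in the $m$-fold setting. One cannot simply combine the integer result with the identity $\chi_f=\ch_f$: a graph that is not $a$-choosable need not fail to be $(am,m)$-choosable — complete bipartite graphs, for instance, have arbitrarily large choice-number but are $(2m,m)$-choosable because their fractional chromatic number is $2$ — so the list assignment has to be rebuilt so as to defeat $m$-fold colourings rather than ordinary ones. Recall the shape of Steiner's argument (we refer to~\cite{Ste} for the details): fixing a parameter depending only on $\varepsilon$, one assembles a $K_t$-minor-free graph $G$ from many copies of a small gadget attached to a common skeleton, puts on $G$ a list assignment $L$ with all lists of size just below $2t$, and derives a contradiction from any proper $L$-colouring by a counting argument on list intersections; $K_t$-minor-freeness is certified separately by a bound on the Hadwiger number of the construction, the essential point being that arbitrarily many gadgets may be attached without increasing this Hadwiger number.

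For Theorem~\ref{thm-1} I would keep the same skeleton and the same gadget type, and make two changes. First, inflate the palette by the factor $m$: replace each colour $c$ used by Steiner by $m$ fresh colours $c^{(1)},\dots,c^{(m)}$ and adjust the lists so that Steiner's incidence pattern is reproduced with every cardinality multiplied by $m$; then all lists have size exactly $(2-\varepsilon)tm$. Second, take enough copies of the gadget that the family of ``bad'' $m$-fold partial colourings of the skeleton is still covered; this enlarges $G$ but, by the point noted above, leaves its Hadwiger number — and hence the threshold $t_0=t_0(\varepsilon)$, which remains independent of $m$ — unchanged from~\cite{Ste}.

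It then remains to rerun Steiner's counting to show that no $m$-fold $L$-colouring $\phi$ can exist. So suppose $\phi(v)\subseteq L(v)$, $|\phi(v)|=m$, and $\phi(u)\cap\phi(v)=\emptyset$ for every edge $uv$. Each step of Steiner's argument of the form ``a set of fewer than $k$ colours fails to meet some member of a prescribed family $\mathcal F$'' is replaced by its $m$-fold form ``a set of fewer than $km$ colours contains fewer than $m$ elements of some member of the $m$-inflated family'', and each place where distinct colours are forced is replaced by one where pairwise disjoint $m$-sets are forced. Because every such family and every quantitative margin in~\cite{Ste} has been scaled by the same factor $m$, these statements hold with the same parameters, and the deduction terminates, exactly as in~\cite{Ste}, with a vertex $w$ for which $\bigl|L(w)\setminus\bigcup_{u\sim w}\phi(u)\bigr|<m$, contradicting $|\phi(w)|=m$.

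The main obstacle I anticipate is this last lifting. In the $m$-fold setting the adversary colouring a vertex has genuinely more freedom — it may split the $m$ colours of $\phi(v)$ across several gadgets rather than committing to one colour on each — so one must verify gadget by gadget that the forcing still bites, i.e.\ that every inequality in~\cite{Ste} which is applied with a lower-order ``$+1$'' or ``$\geq k$'' slack survives multiplying the relevant cardinalities by $m$. Those slacks are of strictly smaller order than the leading $2t$-term — which is precisely why~\cite{Ste} obtains the factor $2-\varepsilon$ and not something smaller — so the inflation is absorbed; confirming that no gadget secretly relies on an integrality phenomenon incompatible with $m$-fold colouring is the crux of the adaptation.
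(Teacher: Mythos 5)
Your proposal is correct and takes essentially the same route as the paper: the paper likewise adapts Steiner's construction directly to the $m$-fold setting, attaching one copy of the gadget $H$ (from Steiner's lemma) along the shared clique $A$ for every possible assignment of pairwise disjoint $m$-sets to $A$ from a palette of size just below $2nm$, defining lists by deleting the non-neighbours' colour sets, and concluding with the same pigeonhole count scaled by $m$, with the lower-order slack absorbed exactly as you indicate so that $t_0$ depends only on $\varepsilon$. The only immaterial difference is that the paper uses a palette of size $2nm-1$ rather than an $m$-inflated palette of size $(2n-1)m$.
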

		
Note that the graph $G$ in Theorem~\ref{thm-1} depends on  $m$ (as well as on $\epsilon$).  The  result of Reed and Seymour implies that for every $K_t$-minor-free graph $G$, there is a constant $m$ such that $G$ is $(2tm,m)$-choosable. Here the integer $m$ depends on $G$. Theorem~\ref{thm-1} has no implication for the fractional choice number of $G$.

\section{A key lemma}	

Let $G_1$ and $G_2$ be graphs, and   $S_i$ be a $k$-clique in   $G_i$ for $i=1,2$.
We say a graph $G$ is a {\em $k$-clique-sum} of $G_1$ and $G_2$ (on $S_1$ and $S_2$), if $G$ is obtained from the disjoint union of $G_1$ and $G_2$ by identifying pairs of the vertices of $S_1$ and $S_2$ to form a single shared clique and then possibly deleting some of the clique edges. The lemma is well-known and easily proved. 
 
\begin{lemma}
\label{lem-3}
Let $G_1$ and $G_2$ be $K_t$-minor-free graphs. If $G$ is a $k$-clique-sum of   $G_1$ and $G_2$ (on cliques $S_1$ of $G_1$ and $S_2$ of $G_2$), then   $G$ is $K_t$-minor-free.
\end{lemma}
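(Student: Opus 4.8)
The plan is to prove the contrapositive-flavoured statement directly: assuming $G$ has a $K_t$-minor, we derive that one of $G_1$ or $G_2$ does too. Recall that a $K_t$-minor of $G$ is witnessed by $t$ pairwise disjoint, nonempty, connected vertex sets (branch sets) $B_1,\dots,B_t\subseteq V(G)$ such that for every pair $i\neq j$ there is an edge of $G$ between $B_i$ and $B_j$. Let $S=S_1=S_2$ denote the shared clique of size (at most) $k$ in $G$, and write $V_1=V(G_1)$, $V_2=V(G_2)$ for the two sides, so that $V_1\cap V_2=S$, every edge of $G$ lies inside $G_1$ or inside $G_2$, and (crucially) every path in $G$ from $V_1\setminus S$ to $V_2\setminus S$ passes through $S$; that is, $S$ separates $G_1-S$ from $G_2-S$.

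First I would analyse how the branch sets interact with the separator $S$. Since each $B_i$ is connected and $S$ separates the two sides, if $B_i$ meets both $V_1\setminus S$ and $V_2\setminus S$ then $B_i\cap S\neq\emptyset$. Call a branch set a \emph{$1$-set} if $B_i\subseteq V_1$ and a \emph{$2$-set} if $B_i\subseteq V_2$ (a set contained in $S$ is both); every branch set is a $1$-set or a $2$-set or (if it meets both open sides) hits $S$. The key combinatorial point is that at most $k=|S|$ branch sets can fail to be entirely on one side, since such sets must contain distinct vertices of $S$. So after discarding at most $k$ branch sets we may assume each remaining $B_i$ lies wholly in $V_1$ or wholly in $V_2$; but this only gives a $K_{t-k}$-minor in one side, which is not good enough.

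The standard fix, and the heart of the argument, is to \emph{redistribute} rather than discard. Work on the $G_1$ side: for each $2$-set $B_i$, the edges connecting $B_i$ to the other branch sets that we want to keep on the $G_1$ side must, since they live in $G_1$, actually go to vertices of $S$. Build a new family in $G_1$ as follows. Keep every $1$-set unchanged. For the branch sets that meet $S$, each already contains a vertex of $S$ and we restrict (or keep) it suitably inside $V_1$, using connectivity of $B_i\cap V_1$ which holds because $B_i$ is connected and $S$ separates — more carefully, $B_i\cap V_1$ need not be connected, so one takes the union of $B_i\cap V_1$ with nothing extra only when it is connected; in general one argues that the portion of $B_i$ on the $G_1$ side together with the single vertex of $S$ it uses can be taken connected. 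The cleanest route is: since there are at most $k$ ``mixed'' branch sets and $S$ is a clique of size $\le k$, assign to the $j$-th mixed branch set a private vertex $s_j\in S$; replace that branch set by its intersection with $V_1$ restricted to the component containing $s_j$ (nonempty and connected by choice of $s_j$). Every $2$-set $B_i$ that is disjoint from $S$ can simply be deleted on the $G_1$ side — but then we lose it. So instead we must decide, side by side, whether the resulting minor lands in $G_1$ or in $G_2$; the right statement is that \emph{one} of the two sides inherits enough branch sets.

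Concretely: partition the $t$ branch sets into the pure $1$-sets, the pure $2$-sets, and the (at most $k$) mixed sets. If there are $a$ pure $1$-sets, $b$ pure $2$-sets and $c\le k$ mixed sets, with $a+b+c=t$, then on the $G_1$ side one obtains a $K_{a+c}$-minor (using the pure $1$-sets plus the $S$-restricted mixed sets, and noting all required cross-edges among these lie in $G_1$), and symmetrically a $K_{b+c}$-minor on the $G_2$ side. Since $(a+c)+(b+c)=t+c\ge t+ (\text{number of mixed sets needed})$, and in fact we only need $\max(a+c,\,b+c)\ge t$ — hmm, that is false in general. The correct and well-known resolution is that the mixed branch sets, living on the clique $S$, can be \emph{attached to both sides simultaneously}: contract each mixed set to its private vertex $s_j\in S$; these $s_j$ are pairwise adjacent (clique) and are adjacent, within $G_1$, to every pure $1$-set's branch set and, within $G_2$, to every pure $2$-set's. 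Now form branch sets in $G_1$ by taking the pure $1$-sets, the singletons $\{s_j\}$, and additionally routing each pure $2$-set: it is adjacent (in $G$) to every other branch set, in particular to at least one vertex currently in $V_1$... which again fails. The honest main obstacle, then, is exactly this: a pure $2$-set has no representative in $G_1$, and vice versa. The resolution used in the literature (and which I would write out) is to observe that since $|S|\le k$, if both $a+c<t$ and $b+c<t$ then $b\ge 1$ and $a\ge 1$, and one shows a pure $2$-set together with all pure $1$-sets and the mixed sets would force a separation of $S$ into $<k$ effective pieces, a contradiction with $K_t$, $t > k$; I expect the clean write-up to proceed by induction on the number of pure $2$-sets, each step replacing a pure $2$-set by merging it into a mixed set via a vertex of $S$ it must dominate, which is valid precisely because $S$ is a clique and the number of mixed sets stays $\le k < t$. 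The main obstacle is thus bookkeeping the separator correctly; once the redistribution is set up, checking the two minor-closure conclusions is routine, and the lemma follows.
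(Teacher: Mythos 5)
Your write-up is an exploration rather than a proof: it records several attempted reductions, twice concedes that the current attempt fails (``that is false in general'', ``which again fails''), and ends by saying you \emph{expect} an induction on the number of pure $2$-sets to work. That final step is precisely the unproved point: you merge a pure $2$-set into a mixed set ``via a vertex of $S$ it must dominate'', but a branch set contained in $V_2$ need not contain or dominate any vertex of $S$; this only follows when some other branch set lies wholly inside $V_1\setminus S$, a dichotomy you never isolate, and you also never check that the merging keeps the branch sets disjoint, connected, and pairwise adjacent. There is also a small conflation throughout: after the clique-sum some edges inside $S$ may be deleted, so $S$ is a clique of $G_1$ and of $G_2$ but not necessarily of $G$; your argument repeatedly uses ``$S$ is a clique'' and needs to say in which graph.

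The missing idea that makes the lemma easy is this: no branch set contained in $V_1\setminus S$ can coexist with one contained in $V_2\setminus S$, since $G$ has no edge between these two sets while any two branch sets must be joined by an edge. Hence, after possibly swapping $G_1$ and $G_2$, every branch set $B_i$ meets $V_1$; set $B_i':=B_i\cap V_1$. Each $G_1[B_i']$ is connected: either $B_i\subseteq V_1\setminus S$, in which case all edges of $G[B_i]$ are edges of $G_1$; or every component of $G_1[B_i']$ meets $S$ (a component avoiding $S$ would lie in $V_1\setminus S$ and, by connectivity of $G[B_i]$, would send a $G$-edge to the rest of $B_i$; that edge, having an endpoint in $V_1\setminus S$, is an edge of $G_1$ with its other endpoint in $B_i'$, contradicting that we had a whole component), and these components are linked because $S$ is a clique of $G_1$. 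Pairwise adjacency of $B_i',B_j'$ in $G_1$ follows from the same case split: if both meet $S$, use the clique edges of $G_1$; otherwise one of them lies in $V_1\setminus S$ and the $G$-edge joining $B_i$ to $B_j$ incident with it is an edge of $G_1$ ending in the other set's intersection with $V_1$. This exhibits a $K_t$-minor in $G_1$, the desired contradiction; your redistribution machinery circles around this observation but never closes the argument. (The paper itself omits a proof, calling the lemma well known, so the only issue is completeness of yours.)
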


\begin{definition}
Assume $\lambda=\{k_1,k_2, \ldots, k_q\}$ is a multiset of positive integers, $G$ is a graph and  $K=\{v_1,v_2, \ldots, v_p\}$ is a  clique in $G$, and $\mathcal{C}=(C_1,C_2, \ldots, C_q)$ is a $q$-tuple of   disjoint colour sets. A  $(\lambda, \mathcal{C})$-list assignment of $G$ is a  list assignment $L$ of $G$ such that for each vertex $v$, $|L(v) \cap C_i| \ge k_i$. 
     
Assume $K'=\{v'_1, v'_2, \ldots, v'_p\}$ is a   $p$-clique (disjoint from $G$), and $\psi'$ is a proper colouring of $K'$.  A  $(\lambda, \mathcal{C})$-list assignment $L$ of $G$ is a {\em $\psi'$-obstacle } for $(G,K, \mathcal{C})$  if the colouring $\psi$ of $K$ defined as $\psi(v_i) = \psi'(v'_i)$ is an $L$-colouring of $K$ that cannot be  extended to a proper $L$-colouring of $G$. 
\end{definition}
  
The following lemma will be used in some of our proofs.
  
\begin{lemma}
	\label{lem-key}
	Let $t$ be a positive integer and  $\lambda=\{k_1,k_2, \ldots, k_q\}$ be a multiset of positive integers.
	Assume there are $K_t$-minor-free   graphs $H_1$ and $H_2$,   a  clique $K=\{v_1,v_2, \ldots, v_p\}$   in $H_1$, a $q$-tuple of   disjoint colour sets $\mathcal{C}=(C_1,C_2, \ldots, C_q)$ 
  and 
  a $(\lambda, \mathcal{C})$-list assignment $L$ of $H_2$,  for which the following holds:
\begin{itemize}
	\item  for any $L$-colouring $\psi$ of $H_2$, there is a   $p$-clique $K_{\psi} = \{v_{\psi,1}, v_{\psi,2}, \ldots, v_{\psi, p}\}$ in $H_2$, such that there 
exists a $\psi|_{K_{\psi}}$-obstacle $L_{\psi}$ for $(H_1,K,\mathcal{C})$.
\end{itemize}
Then there is a $K_t$-minor-free graph $G$ that is not $\lambda$-choosable.
\end{lemma}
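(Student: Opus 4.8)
The plan is to realise the conclusion by a single ``universal gadget'': glue a copy of $H_1$ onto $H_2$ along every $p$-clique of $H_2$ and every colouring of that clique that could conceivably arise from an $L$-colouring of $H_2$, using $p$-clique-sums, and then equip the result with the list assignment built from $L$ together with the obstacles. Concretely, since $V(H_2)$ and the colour set $\bigcup_{v\in V(H_2)}L(v)$ are finite, there are only finitely many pairs $(K',\varphi)$ in which $K'=(v'_1,\dots,v'_p)$ is an ordered $p$-clique of $H_2$ and $\varphi$ is a proper colouring of $K'$; let $\mathcal P$ collect those pairs for which some $L$-colouring $\psi$ of $H_2$ satisfies $K_\psi=K'$ and $\psi|_{K_\psi}=\varphi$, and for each such pair fix a $\varphi$-obstacle $L_{K',\varphi}$ for $(H_1,K,\mathcal C)$, which the hypothesis provides. (If $H_2$ has no $L$-colouring, then $G:=H_2$ already works, since $L$, being a $(\lambda,\mathcal C)$-list assignment, is in particular a $\lambda$-list assignment; so assume otherwise.)

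To build $G$, I would order $\mathcal P=\{(K'_1,\varphi_1),\dots,(K'_m,\varphi_m)\}$, set $G_0:=H_2$, and for $j=1,\dots,m$ let $G_j$ be the $p$-clique-sum of $G_{j-1}$ with a fresh copy of $H_1$ obtained by identifying the $i$-th vertex of $K$ with the $i$-th vertex of $K'_j$ and deleting no clique edges; put $G:=G_m$. Since $K'_j\subseteq V(H_2)$ and no edge is ever deleted, $K'_j$ is still a $p$-clique of $G_{j-1}$, so each $G_j$ is a $p$-clique-sum of two $K_t$-minor-free graphs, and $G$ is $K_t$-minor-free by Lemma~\ref{lem-3} and induction on $m$. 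Define $\hat L$ on $G$ by $\hat L(v):=L(v)$ for $v\in V(H_2)$ and $\hat L(u):=L_{K'_j,\varphi_j}(u)$ for each vertex $u$ of the $j$-th copy of $H_1$ not lying in $H_2$. The key point making this legitimate is that $L$ and \emph{every} obstacle $L_{K'_j,\varphi_j}$ are $(\lambda,\mathcal C)$-list assignments for the \emph{same} tuple $\mathcal C=(C_1,\dots,C_q)$; hence $|\hat L(v)\cap C_i|\ge k_i$ for every $v\in V(G)$ and every $i$, so $\hat L$ is a $(\lambda,\mathcal C)$-list assignment of $G$, and in particular a $\lambda$-list assignment.

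Finally I would show $G$ is not $\hat L$-colourable. Given a proper $\hat L$-colouring $\phi$ of $G$, its restriction $\psi$ to $V(H_2)$ is a proper $L$-colouring of $H_2$, so by the defining property of $\mathcal P$ there is an index $j$ with $K'_j=K_\psi$ and $\varphi_j=\psi|_{K_\psi}$. Then $\phi$ restricted to the $j$-th copy of $H_1$ colours each non-$H_2$ vertex $u$ from $\hat L(u)=L_{K'_j,\varphi_j}(u)$ and colours $v_i$ with $\varphi_j(v'_i)$, which lies in $L_{K'_j,\varphi_j}(v_i)$ by the definition of a $\varphi_j$-obstacle; thus it is a proper $L_{K'_j,\varphi_j}$-colouring of $H_1$ extending the colouring of $K$ prescribed by $\varphi_j$, contradicting that $L_{K'_j,\varphi_j}$ is a $\varphi_j$-obstacle for $(H_1,K,\mathcal C)$. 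Hence no such $\phi$ exists, and $G$ is a $K_t$-minor-free graph that is not $\lambda$-choosable. I expect the only delicate points to be bookkeeping: verifying $\mathcal P$ is finite so $G$ is a finite graph, retaining clique edges so that earlier summing cliques survive for later sums, and — the conceptual crux — observing that one $\lambda$-list assignment of $G$ can carry all the obstacles at once precisely because they share the common colour-tuple $\mathcal C$, which is exactly why the lemma is phrased through $(\lambda,\mathcal C)$-list assignments rather than arbitrary $\lambda$-list assignments.
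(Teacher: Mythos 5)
Your proof is correct and follows essentially the same construction as the paper: glue copies of $H_1$ onto $H_2$ via $p$-clique-sums along the cliques $K_\psi$, use the shared tuple $\mathcal C$ to combine $L$ with the obstacle lists into one $(\lambda,\mathcal C)$-list assignment, and derive the contradiction by restricting a hypothetical colouring to $H_2$ and then to the relevant copy of $H_1$. The only (harmless) difference is bookkeeping: you index the copies of $H_1$ by the finitely many pairs $(K',\varphi)$ arising from $L$-colourings, whereas the paper attaches one copy per $L$-colouring $\psi$ of $H_2$.
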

\begin{proof}
	We shall construct a graph $G$ and a $\lambda$-list assignment $L'$ of $G$ so that $G$ is $K_t$-minor-free and $G$ is not $L'$-colourable.
	Let $H_1,H_2$ be  graphs, $K$ be a  $p$-clique in $H_1$, 
	and $L$ be a $(\lambda, \mathcal{C})$-list assignment of $H_2$, satisfying the assumption of the lemma.
	
	Now we start the construction of $G$ and $L'$. First we take  a copy of $H_2$, and let $L$ be the $(\lambda, \mathcal{C})$-list assignment of $H_2$
	as above.  
	
	For each  proper $L$-colouring $\psi$ of $H_2$, choose a $p$-clique   $K_{\psi}=\{v_{\psi,1}, v_{\psi,2}, \ldots, v_{\psi,p}\}$      in $H_2$, for which there is  a  $\psi|_{K_{\psi}}$-obstacle $L_{\psi}$  for $(H_1,K,\mathcal{C})$.  Take a copy $H_{\psi}$ of $H_1$.
	Let $K'_{\psi} =\{v'_{\psi,1}, v'_{\psi,2}, \ldots, v'_{\psi,p}\}$ be the copy of $K$ in $H_{\psi}$  (where $v'_{\psi,i}$ is the copy of $v_i \in V(K)$ in $K'_{\psi}$). Identify  $K_{\psi}$ with $K'_{\psi}$ in such a way that $v_{\psi,i}$ is identified with $v'_{\psi, i}$. Extend the list assignment $L$ to $V(H_{\psi}) - K'_{\psi}$ by letting $L'(v_{\psi})=L_{\psi}(v)$, where $v_{\psi}$ is the copy of $v \in H_1$ in $H_{\psi}$. 
	
	This completes the construction of the graph $G$ and the list assignment $L'$ of $G$. Observe that for each  proper $L$-colouring $\psi$ of $H_2$, we have chosen a $p$-clique $K_{\psi}=\{v_{\psi,1}, v_{\psi,2}, \ldots, v_{\psi,p}\}$ in $H_2$. A vertex $v$ of $H_2$ may be contained in many copies of $p$-cliques, say $v$ is contained in $K_{\psi_1}, K_{\psi_2}, \ldots, K_{\psi_s}$. Then $v$ has different names in these copies of cliques. The name for $v$ in  $K_{\psi}$ is only used to find its partner vertex (the vertex to be identified with $v$) in $H_{\psi}$. So this  leads to no confusion. 
	
	It follows from Lemma~\ref{lem-3} that $G$ is $K_t$-minor-free, and $L'$ is a $(\lambda, \mathcal{C})$-list assignment of $G$.

	Now we show that $G$ is not $L'$-colourable. Assume to the contrary that 
	there is a   proper $L'$-colouring $\phi$ of $G$. Let $\psi$ be the restriction of $\phi$ to $H_2$. The restriction of $\phi$ to $H_{\psi}$ is an $L_{\psi}$-colouring of     $H_{\psi}$. But $L_{\psi}$ is a $\psi|_{K_{\psi}}$-obstacle for $(H_1,K, \mathcal{C})$,   a contradiction. 
\end{proof}
		 
\section{Proofs of the theorems}

\thmI*

\begin{proof}
The proof is by induction on $t$. For $t=5$, a non-$\{1,1,2\}$-choosable planar graph (hence a $K_5$-minor-free graph) was constructed in~\cite{VoigtNOT112}. Assume $t \ge 6$ and there exists a $K_{t-1}$-minor-free graph $G_{t-1}$ that is not $\lambda$-choosable, where $\lambda=\{1\star(t-4),2\}$. 

Let $L$ be a $\lambda$-list assignment of $G_{t-1}$, such that $G_{t-1}$ is not $L$-colourable. Let $\mathcal{C}=(C_1,C_2, \ldots, C_{t-3})$ be a $(t-3)$-tuple of disjoint colour sets so that for each vertex $v$ of $G_{t-1}$, $|L(v) \cap C_i| =1$ for $1 \le i \le t-4$ and $|L(v) \cap C_{t-3}| =2$. Zhu~\cite{Zhu} showed that we may  assume     $C_i=\{c_i\}$  for $i=1,2,\ldots, t-4$.

 Let $H_1$ be the graph obtained from $G_{t-1}$ by adding a vertex $u$ adjacent to every vertex of $G_{t-1}$. Let 
 $$\mathcal{C}'=(C'_1,C'_2, \ldots, C'_{t-2})$$ 
 where $C'_i=C_i$ for $i=1,2,\ldots, t-4$, $C'_{t-3}=\{c_{i-3}\}$ and $C'_{t-2}=C_{t-3}$. Let $a,b$ be two colours from $C_{t-3}$. Let $\lambda'=\{1 \star (t-3),2\}$ and let $L'$ be the $(\lambda', \mathcal{C}')$-list assignment of $H_1$ defined as 
 \[
L'(v) = \begin{cases} L(v) \cup \{c_{t-3}\}, &\text{ if $v \in V(G_{t-1})$}, \cr 
\{c_1,c_2, \ldots, c_{t-3}, a,b\}, &\text{ if $v=u$}.
\end{cases}
 \]
 Let $K=\{u\}$ be the $1$-clique in $H_1$. 
 If $\psi$ is an $L'$-colouring of a copy of $K_1=\{u'\}$ with $\psi(u')=c_i$ for some $1 \le i \le t-3$, then $L'$ is a $\psi$-obstacle for $(H_1,K,\mathcal{C}')$.

 Let $H_2$ be a  triangle   and  $L''$ be the $(\lambda', \mathcal{C}')$-list assignment of $H_2$, defined as $L''(v)=\{c_1,c_2, \ldots,c_{t-3}, a,b\}$ for each vertex $v$ of $H_2$. Then for any proper $L''$-colouring $\psi$ of $H_2$, there is a vertex $v$ (a copy of $K_1$) such that $\psi(v) = c_i$ for some $1 \le i \le t-3$. Hence $L'$ is a $\psi|_{\{v\}}$-obstacle for $(H_1,K, \mathcal{C}')$.

By Lemma~\ref{lem-key}, there is a $K_t$-minor-free graph $G_t$ that is not $\lambda'$-choosable.
 \end{proof}
		
\thmII*
	
\begin{proof}	 
Assume $a$ is a positive integer. Let 
$$m := \binom{2a+5}{a+3} \quad \text{and}\quad t_1:=(2a+5)m+2.$$  
Assume   $t\ge t_1$. We shall construct a $K_t$-minor-free graph $G$ that is not $\{1 \star (t-2a-6), 3a+6\}$-choosable by using Lemma~\ref{lem-key}.

First, let $H_1$   be a graph with vertex set $A \cup B$  such that   $A \cap B = \emptyset$ and
	\begin{itemize}
			\item   $A$ induces a $(2a+5)$-clique,  $B$ induces a $(t-2)$-clique, 
			\item each vertex in $B$ has exactly $a+3$ neighbours in $A$, and
   \item for each $(a+3)$-subset $X$ of $A$, if $B_X := \{v \in B: N_{H_1}(v) \cap A = X\}$, then 
   $$| B_X| \ge \left\lfloor \frac {t-2}{m} \right\rfloor.$$
		\end{itemize}
		 It is easy to see that such a graph $H_1$ exists. 
		
		\begin{claim}
		\label{clm1}
			The graph $H_1$ is $K_t$-minor-free.
		\end{claim}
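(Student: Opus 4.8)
The claim asserts that $H_1$ is $K_t$-minor-free, where $H_1$ has vertex set $A\cup B$ with $|A| = 2a+5$, $|B| = t-2$, $A$ a clique, $B$ a clique, and each vertex of $B$ has exactly $a+3$ neighbours in $A$.

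\textbf{Plan.} The plan is to argue by contradiction: suppose $H_1$ has a $K_t$-minor, witnessed by $t$ pairwise disjoint, connected branch sets $\beta_1,\dots,\beta_t \subseteq A\cup B$, pairwise joined by an edge. Since $|A\cup B| = (2a+5)+(t-2) = t + (2a+3)$, the branch sets are nearly a partition: at most $2a+3$ vertices lie outside $\bigcup_i \beta_i$, and since $A$ has only $2a+5$ vertices, at most $2a+5$ of the branch sets meet $A$. Hence at least $t - (2a+5)$ branch sets are entirely contained in the clique $B$; each such branch set may be assumed to be a single vertex of $B$ (contracting a clique subset only helps), so we may take these to be singletons.

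\textbf{Key steps.} First I would reduce to the case where every branch set is either a subset of $A$ together with possibly one vertex of $B$, or a subset of $B$ together with at most one vertex of $A$ — more precisely, since $B$ is a clique, any two branch sets each contained in $B$ are automatically adjacent, so the only role $A$ plays is to supply extra adjacencies. The crucial counting is this: let $j$ be the number of branch sets that contain at least one vertex of $A$; then $j \le |A| = 2a+5$. The remaining $t-j$ branch sets are subsets of $B$. Two branch sets that are both subsets of $B$ are adjacent for free. A branch set $\beta_i\subseteq B$ and a branch set $\beta_\ell$ meeting $A$: they are adjacent only through an edge from some vertex of $\beta_i$ (in $B$) to a vertex of $\beta_\ell$ in $A$ — but each vertex of $B$ has only $a+3$ neighbours in $A$. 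The heart of the obstruction: focus on the branch sets meeting $A$; they use up some set $A' \subseteq A$, and each branch set $\beta_i \subseteq B$ must "see" all $j$ of these branch sets, hence must have a neighbour in each of the $j$ corresponding (disjoint) pieces of $A'$, requiring $\beta_i$ to contain $\ge j$ distinct vertices of $B$ when... wait — more carefully, each individual vertex of $B$ has $a+3$ neighbours in $A$, so a branch set $\beta_i \subseteq B$ of size $s_i$ can reach at most... no, I should count which branch sets meeting $A$ it can be adjacent to: it needs, for each such $\beta_\ell$, an edge into $\beta_\ell \cap A$. So I would bound $j$: the $t-j$ branch sets contained in $B$ partition a subset of $B$ of size $\le t-2$, so $\sum_{\beta_i\subseteq B} s_i \le t-2$, meaning the average size is less than $1 + \tfrac{j-2}{t-j}$ which for $t$ large is close to $1$; in fact at least $t - j - (2a+3)$ of them are singletons (since total excess over a partition of $B$ is $\le (2a+3)$ minus what $A$-meeting sets consume). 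A singleton $\{w\}\subseteq B$ has only $a+3$ neighbours in $A$, so it can be adjacent to at most $a+3$ of the $j$ branch sets meeting $A$ — forcing $j \le a+3$... but that contradicts nothing yet. The real contradiction must come from combining: with $j\le a+3$ branch sets touching $A$ and the rest inside $B$, we get a $K_t$-minor inside a graph that is (clique $B$ of size $t-2$) plus $j \le a+3$ extra branch sets, giving a $K_{t-2+j}$-type bound — and the point is $t-2+\min(j, \text{something}) < t$. I would make this precise: the contraction of $H_1$ to a clique has at most $(t-2) + (2a+5)$ vertices but the effective clique number is governed by how the $B$-part branch sets interact.

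\textbf{Main obstacle.} The delicate part — and where I would spend the most care — is the exact counting argument showing $j$ plus the number of "useful" $B$-branch-sets cannot reach $t$. The clean way: let $\beta_{i_1},\dots,\beta_{i_j}$ be the branch sets meeting $A$, and let $r$ be the number of singleton branch sets $\{w\}$ with $w\in B$. Each such singleton is adjacent to all other branch sets in $B$ automatically, but to be adjacent to all $j$ of the $A$-meeting sets it needs $\deg_A(w) = a+3 \ge j$ edges going to $j$ distinct branch-set pieces, so $j \le a+3$. Meanwhile $r \ge (t-2) - (\text{vertices of }B\text{ used by }A\text{-meeting sets}) - (\text{vertices of }B\text{ in non-singleton }B\text{-sets})$, and the total vertex count $t = j + (\#B\text{-only sets}) \le j + (t-2)$ is too weak — so instead I would bound $\#B$-only sets $\le |B| - (\text{number forced to have size}\ge 2)$. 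Since each $B$-only set must reach every one of the other $\ge t-1-j$... I think the cleanest argument is: every branch set entirely in $B$ must be adjacent to every $A$-meeting branch set, and if it's a singleton this caps $j\le a+3$; if additionally $t > (t-2) + (a+3)$ is false we need another count. I would resolve this by the standard Hadwiger-type inequality: a graph on $n$ vertices with a $K_t$-minor where $n - t \le c$ small forces a near-partition, and then I'd check directly that identifying the structure (clique $B$ + sparse attachment to small $A$) caps the clique minor at $|B| + \text{something} \le (t-2) + 2 = t$ only with equality-forcing conditions that fail because $a+3 < 2a+5$ when $a\ge 0$... actually $a+3 \le 2a+5$ always, equality never for the relevant bound. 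So I anticipate the write-up hinges on: at most $2a+5$ branch sets meet $A$, at most $2a+3$ "extra" vertices means at least $t-1$ branch sets are singletons, a singleton in $B$ has $\le a+3 < 2a+5$ neighbours in $A$ hence cannot be adjacent to branch sets that meet $A$ only through the $\ge a+4$ other vertices of $A'$ — and pigeonhole on the $(a+3)$-subsets of $A$ (this is exactly why $m = \binom{2a+5}{a+3}$ appears!) gives two singleton $B$-vertices with the same $A$-neighbourhood, which then cannot both be adjacent-to-and-distinct-from the branch sets covering the other $a+2$ vertices of $A$, contradicting that all $t$ branch sets are pairwise adjacent.
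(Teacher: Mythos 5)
Your proposal does not reach a proof: it is a sequence of partial counting attempts, several of which are incorrect, and the final sketch does not close the contradiction. Concretely: the reduction ``each branch set contained in $B$ may be assumed to be a singleton'' is invalid, since shrinking such a branch set can destroy its adjacency to a branch set meeting $A$ (different vertices of $B$ have different $(a+3)$-neighbourhoods in $A$, and that adjacency may be realised only through the deleted vertices). The deduction ``forcing $j\le a+3$'' also fails, because a branch set meeting $A$ may contain vertices of $B$ as well, and a singleton of $B$ is adjacent to all of $B$; and ``at least $t-1$ branch sets are singletons'' should be $t-(2a+3)$. Most importantly, your closing pigeonhole argument runs the logic backwards: two $B$-singletons with the same $A$-neighbourhood $X$ yield no contradiction unless some branch set lies entirely inside $A\setminus X$, and you never establish that any branch set lies entirely inside $A$ at all. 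That existence statement is the linchpin: since the $t$ branch sets are disjoint and $|B|=t-2$, at most $t-2$ of them meet $B$, so at least two lie entirely in $A$, and since $|A|=2a+5$ one of them, $Z$, has $|Z|\le a+2$.

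You also never invoke the defining property of $H_1$ that $|B_X|\ge\left\lfloor\frac{t-2}{m}\right\rfloor$ for \emph{every} $(a+3)$-subset $X$ of $A$; this hypothesis is essential, since without it the claim is false (if all of $B$ attached to one fixed $X$, then $B\cup X$ would be a clique on $t+a+1\ge t$ vertices). Your aside that $m=\binom{2a+5}{a+3}$ enters via a pigeonhole over neighbourhood types misreads its role: it enters because $t\ge (2a+5)m+2$ makes each class $B_X$ have at least $2a+5$ vertices. The paper's argument is then short: take $Z\subseteq A$ as above, choose an $(a+3)$-subset $X\subseteq A-Z$, observe that every vertex of $B_X$ has all its $A$-neighbours in $X$ and hence is non-adjacent to $Z$, so $|N_{H_1}(Z)|\le |V(H_1)|-|B_X|\le (2a+5)+(t-2)-\left\lfloor\frac{t-2}{m}\right\rfloor< t-1$, contradicting that $Z$ must send an edge to each of the other $t-1$ branch sets. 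Your write-up contains some of the raw ingredients (branch sets, the degree-$(a+3)$ restriction, near-partition counting) but misses this key step and therefore has a genuine gap.
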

		\begin{proof}
			Assume that $H_1$ has a $K_t$-minor. Then   there exists a collection $\mathcal{Z}$ of $t$ non-empty and pairwise disjoint subsets of $V(H_1)$ such that   for each $Z \in \mathcal{Z}$, $H_1[Z]$ is connected, and for any two distinct $Z,Z' \in \mathcal{Z}$, there exists at least one edge in $H_1$ joining a vertex in $Z$ to a vertex in $Z'$. In particular, for any $Z \in \mathcal{Z}$, there are at least $(t-1)$ vertices in 
   $V(H_1)-Z$ adjacent to vertices in $Z$.
			
		  Since $|B|=t-2$,  there are at least two subsets $Z \in \mathcal{Z}$ that are contained in $A$. As $|A|=2a+5$, there  exists $Z \in \mathcal{Z}$ such that $Z \subseteq A$ and $|Z| \le a+2$. 

  Let $X$ be an $(a+3)$-subset of $A-Z$. Then 
			\begin{align*}
			|N_{H_1}(Z)| \le   |V(H_1)|-|B_X| 
			\le\;	&(2a+5)+(t-2)- \left\lfloor \frac {t-2}{m} \right\rfloor \\
				 <\; & t+2a+4-\frac{t-2}{m}\\
				 =\; & t+2a+4-\frac{t-2}{\binom{2a+5}{a+3}}\\
				 \le\; &  t+2a+4-(2a+5)\\
				 =\; & t-1, 
			\end{align*}
		  a contradiction. 
		\end{proof}

	Label the vertices in $A$ as $v_1,v_2,\dots,v_{2a+5}$.
	Let $$\{a_i: i \in [3a+6]\}, \ \{b_i:i\in [t-2a-6]\}, \ \{c_i:i\in [2a+3]\}$$   be pairwise disjoint colour sets.
	Let $\psi: A \to \{a_i: i \in [3a+6]\}$ be a injective mapping. 
	Let $L_\psi$ be the list assignment of $H_1$ defined as follows:
   \begin{itemize}
			    \item[(LA)] $L_\psi(v) = \{b_i:i\in[t-2a-6]\}\cup\{a_i:i\in[3a+6]\}$  for   $v\in A$.
			    \item[(LB)] $L_\psi(v)=\psi(N_A(v)) \cup \{b_i:i\in [t-2a-6]\}\cup\{c_i:i\in [2a+3]\}$ for $v \in B$.
			\end{itemize}
Let
	$$\mathcal{C}=(C_1,C_2, \ldots, C_{t-2a-5})$$ 
	where $C_i = \{b_i\}, \text{  for $i=1,2,\ldots, t-2a-6$},  \   C_{t-2a-5}=\{a_i: i \in [3a+6]\} \cup \{c_i:i\in [2a+3]\}.$
Let $\lambda = \{1\star (t-2a-6), 3a+6\}$.	Then $L_\psi$   is a $(\lambda, \mathcal{C})$-list assignment of $H_1$: 
  For each vertex $v$ of $H_1$, $|L_\psi(v) \cap C_i|=1$ for $i=1,2,\ldots, t-2a-6$, and $|L_\psi(v) \cap C_{t-2a-5}| = 3a+6$.
   Moreover, $\psi$ is an $L_\psi$-colouring of $A$. 
   
  \begin{claim}
	\label{clm2}		 
  $\psi$ cannot be extended to an $L_{\psi}$-colouring of $H_1$.
		\end{claim}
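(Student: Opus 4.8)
The plan is to argue by contradiction: suppose $\phi$ is an $L_\psi$-colouring of $H_1$ extending $\psi$. Since $A$ is a clique and $\psi$ is injective, the $2a+5$ vertices of $A$ use $2a+5$ distinct colours from $\{a_i : i \in [3a+6]\}$, leaving exactly $(3a+6)-(2a+5)=a+1$ colours of $\{a_i\}$ unused on $A$. The key observation is that each vertex $v\in B$ has list $L_\psi(v) = \psi(N_A(v)) \cup \{b_i\} \cup \{c_i\}$; since $A\cup B$ is a clique in $H_1$ (wait — only $A$ and $B$ separately are cliques, but each $v\in B$ sees its $a+3$ neighbours $N_A(v)\subseteq A$), the colour $\phi(v)$ for $v\in B$ cannot equal $\psi(w)$ for any $w\in N_A(v)$. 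So on the set $\psi(N_A(v))$, which has size $a+3$, vertex $v$ loses all $a+3$ of those colours, and is forced to use a colour from $\{b_i : i\in[t-2a-6]\}\cup\{c_i:i\in[2a+3]\}$.

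Now I would count. There are $t-2a-6$ colours $b_i$ and $2a+3$ colours $c_i$, for a total of $t-3$ colours available to vertices of $B$ from this ``safe'' pool. But $B$ induces a $(t-2)$-clique, so all $t-2$ vertices of $B$ must receive \emph{distinct} colours. Since $t-2 > t-3$, by pigeonhole this is impossible unless some vertex of $B$ uses an $a_i$-colour after all — but a vertex $v\in B$ can only use an $a_i$-colour if $a_i \in \psi(N_A(v))$, and all such colours are already used by the corresponding neighbour in $A$, hence forbidden. Therefore no vertex of $B$ can use any $a_i$ colour, all $t-2$ vertices of $B$ are confined to the $t-3$ colours $\{b_i\}\cup\{c_i\}$, and properly colouring a $(t-2)$-clique with $t-3$ colours is impossible. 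This contradiction proves the claim.

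I should be careful about one subtlety: the colours $b_i$ also appear in the lists of vertices in $A$, so in principle a vertex of $A$ might use some $b_i$ instead of an $a_i$, which would free up an $a_i$ colour but \emph{use up} a $b_i$ colour. This does not help: whatever colours the clique $A\cup$ (its chosen palette) uses, the total palette available to all of $V(H_1)$ restricted to non-$a$ colours is $\{b_i\}\cup\{c_i\}$, of size $t-3$, and vertices of $B$ are the only ones constrained, but the real point is that $A\cup B$ together — more precisely, I should track that each $v\in B$ is adjacent to all of $B\setminus\{v\}$ and to $N_A(v)$. The cleanest phrasing: every vertex of $B$ avoids the colours on $N_A(v)$, which are exactly $|\psi(N_A(v))|=a+3$ of the $a$-colours; so $\phi(v)\in \{b_i\}\cup\{c_i\}\cup(\{a_i\}\setminus\psi(N_A(v)))$. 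If some $v\in B$ used a colour $a_j\notin\psi(N_A(v))$, that is fine for $v$ individually, but this is where the structural hypothesis ``for each $(a+3)$-subset $X$, $B_X$ is large'' enters — actually for \emph{this} claim we only need $|B|=t-2$ and the clique structure, so I suspect the genuinely forced argument requires recalling that the whole point is the interaction made precise via $B_X$ in the \emph{next} claim, and Claim~\ref{clm2} itself follows purely from $|B|=t-2 > t-3 = |\{b_i\}\cup\{c_i\}|$ together with the fact that a vertex $v\in B$ can use $a_j$ only if $a_j\notin\psi(N_A(v))$, i.e.\ only $a+1$ choices, so at most — hmm.

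The main obstacle I anticipate is handling the case where vertices of $B$ are permitted to use the $a+1$ ``globally unused'' $a$-colours: I need to show that even allowing this, $B$ cannot be properly coloured. Here is the resolution. Consider the $a+1$ colours of $\{a_i\}$ not used on $A$; call this set $U$. A vertex $v\in B$ may use a colour of $U$ only if that colour lies in $\psi(N_A(v))$ — but $U$ is disjoint from $\psi(A)\supseteq\psi(N_A(v))$, contradiction. Hence \emph{no} vertex of $B$ can use any colour of $U$, and also none can use a colour in $\psi(A)\setminus U$ that lies in its own $\psi(N_A(v))$ (forbidden by adjacency). Could $v\in B$ use $a_j\in\psi(A)\setminus\psi(N_A(v))$? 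That colour is used by some $w\in A$ with $w\notin N_A(v)$, so $v$ and $w$ are non-adjacent and this is permissible for $v$ — \emph{but} $a_j\notin L_\psi(v)$ since $L_\psi(v)\cap\{a_i\}=\psi(N_A(v))$. So in fact $L_\psi(v)\cap\{a_i\} = \psi(N_A(v))$, every element of which is forbidden to $v$. Thus $\phi(v)\in\{b_i\}\cup\{c_i\}$ for \emph{every} $v\in B$, and colouring the $(t-2)$-clique $B$ within $t-3$ colours is impossible. This completes the proof, and no appeal to the $B_X$ condition is needed for Claim~\ref{clm2}.
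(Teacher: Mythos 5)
Your argument is correct and is essentially the paper's own proof: since $\phi$ extends $\psi$ and $L_\psi(v)\cap\{a_i : i\in[3a+6]\}=\psi(N_A(v))$ consists entirely of colours on neighbours of $v$, every $v\in B$ must be coloured from $\{b_i\}\cup\{c_i\}$, a set of $(t-2a-6)+(2a+3)=t-3$ colours, which cannot properly colour the $(t-2)$-clique $B$. Your detours (about $A$ possibly using $b$-colours, or $B$ using unused $a$-colours) are resolved correctly and the observation that the $B_X$ condition is not needed here is also right — it is only used for the $K_t$-minor-freeness claim.
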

		\begin{proof}
			Assume that $H_1$ has an $L$-colouring $\phi_\psi $ which is an extension of $\psi$.
			Then $\phi(v)=\psi(v)$ for each $v\in A$ and $\phi(v)\in\{b_i:i\in [t-2a-6]\}\cup\{c_i:i\in [2a+3]\}$ for every vertex $v\in B$. 
   Thus the vertices of the  $(t-2)$-clique induced by $B$  are coloured by  $(t-2a-6)+(2a+3)=t-3$ colours, a contradiction.    
		\end{proof}

Let $H_2$ be a $(t-1)$-clique and $L'$ be the $\{1\star (t-2a-6),3a+6\}$-list assignment defined as $$L'(v)=\{b_i:i\in[t-2a-6]\}\cup\{a_i:i\in[3a+6]\},$$
for each vertex $v$ of $H_2$. Then for any proper $L'$-colouring $\psi$ of $H_2$, there is a $(2a+5)$-clique $K_\psi=\{v_{\psi,1},v_{\psi,2},\dots,v_{\psi,2a+5}\}$ in $H_2$  such that $\psi(v_{\psi,i})\in\{a_j: j \in [3a+6]\}$ for $i\in[2a+5]$.
 
By Claim~\ref{clm2}, $L_{\psi}$ is a $\psi|_{K_\psi}$-obstacle for $(H_1,H_1[A],\mathcal{C})$.

By Lemma~\ref{lem-key}, there is a $K_t$-minor-free graph $G$ that is not $\{1\star (t-2a-6),3a+6\}$-choosable.
  \end{proof}

\thmIII*

\begin{proof}
Assume $a$ is a positive integer. Let 
$$m :=3^{a+2} \quad\text{and}\quad t_2:=(2a+5)m+a+3.$$  
Assume   $t\ge t_2$. We shall construct a $K_t$-minor-free graph $G$ that is not  $\{1 \star (t-4a-9), 3\star(2a+3)\}$-choosable by using Lemma~\ref{lem-key}.

Let $H_1$ be a graph with vertex set $(A\cup B)$ such that $A\cap B=\emptyset$ and 
\begin{itemize}
			\item $A$ induces a ${3(a+2)}$-clique, $B$ induces a $(t-a-3)$-clique.
%			\item each vertex in $B$ has exactly $2(a+2)$ neighbours in $A$. 
			\item $\{A_1, A_2,\dots,A_{a+2}\}$ is a partition of $A$ with $|A_i|=3$ for $i\in[a+2]$ and $T=\{X\subseteq A:|X\cap A_i|=2, \text{ for each } i\in[a+2]\}$. For each vertex $v \in B$, $N_A(v) \in T$, and for each  $X\in T$, \[|\{v\in B:N_A(v) =X\}|\ge \lfloor \frac {t-a-3}{|T|}\rfloor=\lfloor\frac{t-a-3}{m}\rfloor.\]			
		\end{itemize}
  
It is easy to see that such a graph $H_1$ exists.
		
		\begin{claim}
		\label{clm3}
			The graph $H_1$ is $K_t$-minor-free.
		\end{claim}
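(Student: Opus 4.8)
The plan is to mimic the proof of Claim~\ref{clm1}. I would suppose for contradiction that $H_1$ has a $K_t$-minor, witnessed by a collection $\mathcal{Z}$ of $t$ non-empty, pairwise disjoint subsets of $V(H_1)$, each inducing a connected subgraph, with an edge between any two of them; exactly as in the proof of Claim~\ref{clm1}, this yields $|N_{H_1}(Z)| \ge t-1$ for every $Z \in \mathcal{Z}$. Since $|B| = t-a-3$ and the branch sets are pairwise disjoint, at most $t-a-3$ of them meet $B$, so at least $a+3$ of them are contained in $A$.

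The one new ingredient is to extract from these $\ge a+3$ branch sets lying in $A$ a single branch set $Z$ with $|Z \cap A_i| \le 1$ for every $i \in [a+2]$. I would argue by pigeonhole: if instead every branch set $Z \subseteq A$ had $|Z \cap A_{i(Z)}| \ge 2$ for some index $i(Z) \in [a+2]$, then, because $|A_i| = 3$, no part $A_i$ can contain a $2$-element subset of each of two disjoint branch sets, so $Z \mapsto i(Z)$ would be an injection from a set of size $\ge a+3$ into $[a+2]$, a contradiction.

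Given such a $Z$, since $|A_i \setminus Z| \ge 2$ for each $i$, I can pick a $2$-subset of $A_i \setminus Z$ in each part and take their union $X$, so that $X \in T$ and $X \cap Z = \emptyset$. Then every vertex of $B_X := \{v \in B : N_A(v) = X\}$ has all of its neighbours in $A$ inside $X$, hence is non-adjacent to $Z$, so $N_{H_1}(Z) \subseteq (A \setminus Z) \cup (B \setminus B_X)$. Using $|Z| \ge 1$, $|A| = 3a+6$, $|B| = t-a-3$, and $|B_X| \ge \lfloor (t-a-3)/m \rfloor$,
\[
|N_{H_1}(Z)| \;\le\; (3a+5) + \Bigl( (t-a-3) - \bigl\lfloor \tfrac{t-a-3}{m} \bigr\rfloor \Bigr).
\]
Because $t \ge t_2 = (2a+5)m + a + 3$, we have $\lfloor (t-a-3)/m \rfloor \ge 2a+5$, so the right-hand side is at most $t-3 < t-1$, contradicting $|N_{H_1}(Z)| \ge t-1$. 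Hence $H_1$ is $K_t$-minor-free.

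I expect the pigeonhole step that produces a branch set meeting every part $A_i$ in at most one vertex to be the only real point of the argument; everything after it is a routine estimate that mirrors Claim~\ref{clm1} and is exactly what dictates the choices $m = 3^{a+2} = |T|$ and $t_2 = (2a+5)m + a + 3$. The only bookkeeping worth double-checking is the floor estimate and the identity $|T| = 3^{a+2}$ (three ways to choose a $2$-subset of each of the $a+2$ parts of $A$).
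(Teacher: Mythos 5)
Your proposal is correct and follows essentially the same route as the paper: at least $a+3$ branch sets lie inside $A$, one of them meets each $A_i$ in at most one vertex, and then the set $B_X$ for a transversal-type $X\in T$ disjoint from $Z$ forces $|N_{H_1}(Z)|\le 3a+5+(t-a-3)-\lfloor (t-a-3)/m\rfloor\le t-3<t-1$. The only difference is cosmetic: you spell out the pigeonhole justification (which the paper asserts without proof, and which is indeed the point where $|A_i|=3$ is used) and your count is marginally tighter by subtracting $|Z|\ge 1$, avoiding the paper's strict-inequality step.
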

		
		\begin{proof}
			Assume that $H_1$ has a $K_t$-minor. Then there exists a collection $\mathcal{Z}$ of non-empty and pairwise disjoint subsets of $V(H_1)$ such that for each $Z\in\mathcal{Z}$, $H_1[Z]$ is connected, and for any two distinct $Z,Z' \in \mathcal{Z}$, there exists at least one edge in $H_1$ joining a vertex in $Z$ to a vertex in $Z'$. In particular, for any $Z\in\mathcal{Z}$, there are at least $(t-1)$ vertices in $V(H_1)-Z$ adjacent to vertices in $Z$.
			
			Since $|B|=t-a-3$, there are at least $(a+3)$ subsets $Z\in\mathcal{Z}$ that are contained in $A$. As the partition of $A$ has $(a+2)$ parts $A_1, A_2,\dots,A_{a+2}$ and $|A_i|=3$ for $i\in[a+2]$, there exists $Z\in\mathcal{Z}$ such that $|Z\cap A_i|\le 1$ for each $i\in[a+2]$.
			
			Let $X$ be a $2(a+2)$-subset of $A-Z$ such that $|X\cap A_i|=2$ for each  $i\in[a+2]$. Let $B_X=\{v\in B:N_A(v) =X\}$. Then 
			\begin{align*}
				|N_{H_1}(Z)|\le |V(H_1)|-|B_X|
				\le\;& 3(a+2)+(t-a-3)-\lfloor \frac {t-a-3}{m}\rfloor\\
				<\; & t+2a+4-\frac{t-a-3}{m}\\
				=\; & t+2a+4-\frac{t-a-3}{3^{a+2}}\\
				\le\; & t+2a+4-(2a+5)\\
				=\; & t-1,
			\end{align*}
		a contradiction.
	\end{proof}

Label the vertices in $A_i$ as $A_i=\{u^i_1,u^i_2,u^i_3\}$ for each $i\in[a+2]$.
			Let $$\bigcup_{i\in[2a+3]}\{d^i_1,d^i_2,d^i_3\}, \ \{b_i:i\in [t-5a-9]\}, \ \{c_i:i\in [2a+3]\},\ \bigcup_{i\in[2a+3]}\{c^i_1,c^i_2,c^i_3\}$$   be pairwise disjoint colour sets.

		Let $\psi: A \to \bigcup_{i\in[2a+3]}\{d^i_1,d^i_2,d^i_3\}$ be an injective mapping such that  for each $i\in[a+2]$ there exists $i_0\in[2a+3]$, $\psi(u^i_{j})=d^{i_0}_j$ for $j\in[3]$.
Let 
$$I(\psi)=\{i_0\in [2a+3]:\text{ there exists } i\in [a+2] \text{ such that } \psi(u^i_{j})=d^{i_0}_j \text{ for }j\in[3]\}.$$
Note that $|I(\psi)|=a+2$.
Let $L_\psi$ be the list assignment of $H_1$ defined as follows:
			\begin{itemize}
	\item[(LA')] $L_\psi(v)=\bigcup_{j\in[2a+3]}\{d^j_1,d^j_2,d^j_3\}\cup\{b_i:i\in[t-5a-9]\}$  for   $v\in A$;
	\item[(LB')] $L_{\psi}(v)=\psi(N_{A}(v))\cup\{b_i:i\in [t-5a-9]\}\cup\{c_i:i\in I(\psi)\}\cup\bigcup_{i\in[2a+3]\backslash I(\psi)}\{c^i_1,c^i_2,c^i_3\}$, for  $v\in B$.
			\end{itemize}
Let
		$$\mathcal{C}=(C_1,C_2, \ldots, C_{t-3a-6})$$
    	where $C_i=\{b_i\}$,  for $i=1,2,\dots, t-5a-9$, 
		$C_{t-5a-9+j}=\{d^j_1,d^j_2,d^j_3,c_j,c^j_1,c^j_2,c^j_3\}$, for $j=1,2,\dots,2a+3$.
		Let $\lambda = \{1 \star (t-4a-9), 3\star(2a+3)\}$.	Then $L_\psi$   is a $(\lambda, \mathcal{C})$-list assignment of $H_1$: 
	For each vertex $v$ of $B$, if $i=1,2,\ldots, t-5a-9$, $|L_\psi(v) \cap C_i|=1$; 
If $j\in I(\psi)$, $|L_\psi(v)\cap C_{t-5a-9+j}|=|\bigcup_{u^i_j\in N_{A}(v)}\psi({u}^i_{j})\cup \{c_j\}|=3$;
If $j\in [2a+3]\backslash I(\psi)$, $|L_\psi(v)\cap C_{t-5a-9+j}|=|\{c_1^j,c_2^j,c_3^j\}|=3$.
Moreover, $\psi$ is an $L_\psi$-colouring of $A$. 
   
	   \begin{claim}
		 \label{clm4}
			  $\psi$ cannot be extended to an $L_{\psi}$-colouring of $H_1$
		\end{claim}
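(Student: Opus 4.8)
The plan is to mimic the proof of Claim~\ref{clm2}: show that any extension of $\psi$ is forced to colour the large clique induced by $B$ with too few colours. First I would observe that if $\phi_\psi$ extends $\psi$, then for each $v\in A$ we have $\phi_\psi(v)=\psi(v)\in\bigcup_{i\in[2a+3]}\{d^i_1,d^i_2,d^i_3\}$, and in fact the image $\psi(A)$ is exactly $\bigcup_{i\in I(\psi)}\{d^i_1,d^i_2,d^i_3\}$, a set of $3|I(\psi)|=3(a+2)$ colours. Next, fix any vertex $v\in B$. Its neighbourhood $N_A(v)=X\in T$ meets each part $A_i$ in two vertices, so by the structure of $\psi$, for each $i\in[a+2]$ the colours $\psi(X\cap A_i)$ are two of the three colours $\{d^{i_0}_1,d^{i_0}_2,d^{i_0}_3\}$ for the corresponding $i_0\in I(\psi)$; as $i$ ranges over $[a+2]$ the index $i_0$ ranges over all of $I(\psi)$. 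Hence $\psi(N_A(v))$ consists of exactly $2$ colours from each block $\{d^{i_0}_1,d^{i_0}_2,d^{i_0}_3\}$, $i_0\in I(\psi)$, and since $v$ is adjacent to all of $A$, $\phi_\psi(v)$ is forbidden to use any of these $2(a+2)$ colours. Therefore, by (LB'), $\phi_\psi(v)$ must lie in the remaining available pool
$$\{b_i:i\in[t-5a-9]\}\ \cup\ \bigcup_{i_0\in I(\psi)}\bigl(\{d^{i_0}_1,d^{i_0}_2,d^{i_0}_3\}\setminus\psi(N_A(v))\bigr)\ \cup\ \{c_i:i\in I(\psi)\}\ \cup\ \bigcup_{i\in[2a+3]\setminus I(\psi)}\{c^i_1,c^i_2,c^i_3\}.$$

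Now I would bound the size of this common pool available to the whole clique $B$. The first term contributes $t-5a-9$ colours. For each $i_0\in I(\psi)$, the set $\{d^{i_0}_1,d^{i_0}_2,d^{i_0}_3\}\setminus\psi(N_A(v))$ has size $1$, but here is the subtle point: this leftover colour depends on $v$ (on which two vertices of $A_{i}$ lie in $N_A(v)$). However $d$-colours from block $i_0$ can only ever be used on vertices $v\in B$ whose neighbourhood avoids that one colour, and different vertices of the $B$-clique must get distinct colours anyway; so across the whole clique, block $i_0$ of $d$-colours can colour at most one vertex of $B$ — wait, more carefully, I should just bound the union of all pools: $\bigcup_{v\in B}(\text{pool of }v)$. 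The $d$-colours appearing across all pools is at most $\bigcup_{i_0\in I(\psi)}\{d^{i_0}_1,d^{i_0}_2,d^{i_0}_3\}$, of size $3(a+2)$; the $c_i$ with $i\in I(\psi)$ give $a+2$ colours; the $c^i_j$ with $i\notin[2a+3]\setminus I(\psi)$ — that is $i\in[2a+3]\setminus I(\psi)$, which has $(2a+3)-(a+2)=a+1$ indices — give $3(a+1)$ colours. So the union of all pools has size at most $(t-5a-9)+3(a+2)+(a+2)+3(a+1)=t-5a-9+3a+6+a+2+3a+3=t+2a+2$. That is far too large; the $3(a+2)$ overcounts.

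So the union bound is too weak and I must argue per-vertex and use that $B$ is a clique. The correct argument: the $(t-a-3)$-clique $B$ needs $t-a-3$ distinct colours, all drawn from colours that are \emph{simultaneously} admissible — more precisely, colour class by colour class. A $b_i$ can be used on at most one vertex of $B$: $t-5a-9$ colours. A colour $c_i$ ($i\in I(\psi)$): at most one vertex: $a+2$ colours. A colour $c^i_j$ ($i\in[2a+3]\setminus I(\psi)$): at most one vertex: $3(a+1)$ colours. A $d$-colour $d^{i_0}_\ell$ with $i_0\in I(\psi)$: can be used on at most one vertex of $B$, but moreover only on those $v$ with $d^{i_0}_\ell\notin\psi(N_A(v))$, i.e. on $v$ with $u^{i}_\ell\notin N_A(v)$ where $\psi(u^i_\cdot)=d^{i_0}_\cdot$; regardless, the three colours $d^{i_0}_1,d^{i_0}_2,d^{i_0}_3$ together colour at most — here is where I need to be careful — they could in principle colour three different vertices of $B$. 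But they cannot colour three vertices simultaneously: if $v,w,x\in B$ receive $d^{i_0}_1,d^{i_0}_2,d^{i_0}_3$ respectively then $N_A(v)$ omits $u^i_1$, $N_A(w)$ omits $u^i_2$, $N_A(x)$ omits $u^i_3$; but each of $N_A(v),N_A(w),N_A(x)$ has size exactly $2$ inside $A_i=\{u^i_1,u^i_2,u^i_3\}$, so each omits exactly one — that is consistent, so three vertices is possible. Hmm. So the right count of usable $d$-colours from block $i_0$ is $3$, and I get total $\le t-5a-9+a+2+3(a+1)+3(a+2)=t+2a+2$ again, which does not give a contradiction. This is the main obstacle: I have overlooked a constraint that ties the $d$-colours used in block $i_0$ to the $c_i$ colour. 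The resolution must be that using many $d$-colours forces structure on the $B$-neighbourhoods that then blocks the $c_i$'s, or vice versa — a joint counting. Concretely, for each $i_0\in I(\psi)$ look at vertices of $B$ coloured from $\{d^{i_0}_1,d^{i_0}_2,d^{i_0}_3\}\cup\{c_{i_0}\}$: I will argue this set colours at most $2$ vertices of the clique $B$, not $4$, because of how $N_A(v)\cap A_i$ determines which $d$-colour of block $i_0$ is forbidden. (For instance, if the partition $T$ is set up so that every $v\in B$ with $N_A(v)\cap A_i=\{u^i_1,u^i_2\}$ forbids $d^{i_0}_1,d^{i_0}_2$... wait $\psi(N_A(v))$ uses $\psi(u^i_1),\psi(u^i_2)=d^{i_0}_1,d^{i_0}_2$, forbidding those two, leaving only $d^{i_0}_3$ and $c_{i_0}$ and $b$'s.) The clean statement I expect: for each $i_0\in I(\psi)$, the colours $\{d^{i_0}_1,d^{i_0}_2,d^{i_0}_3,c_{i_0}\}$ can be used on at most $\binom{3}{2}=3$... no. Let me instead phrase the whole obstacle honestly: \textbf{the main obstacle} is to show that the $t-a-3$ vertices of the clique $B$ cannot be properly coloured, i.e. that the available colour pool, counted with the multiplicity constraints coming from the $\psi$-forbidden sets, has size $\le t-a-4$; I would carry this out by summing, over $i_0\in I(\psi)$, a local bound of the form ``$\{d^{i_0}_1,d^{i_0}_2,d^{i_0}_3\}\cup\{c_{i_0}\}$ colours at most $3$ vertices of $B$'' (yielding $3(a+2)$), plus $t-5a-9$ for the $b_i$'s, plus $3(a+1)$ for the $c^i_j$ with $i\notin I(\psi)$, giving $t-5a-9+3(a+2)+3(a+1)=t+a$... still off by too much. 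The genuinely correct accounting — which is exactly the point of the delicate choice $t_2=(2a+5)m+a+3$ and of splitting $2a+3=(a+2)+(a+1)$ — is that exactly $a+2$ of the $2a+3$ ``$d,c$-super-blocks'' $C_{t-5a-9+j}$ are ``active'' ($j\in I(\psi)$) and on each active block a vertex of $B$ sees $2$ forbidden $d$-colours so has only a $1$-element $d$-option plus $c_j$, while the $a+1$ inactive blocks offer $3$ fresh $c^j_\ell$ colours; so each $v\in B$ has colour options of size $(t-5a-9)+(a+2)\cdot 1'+(a+2)\cdot[c_j]+(a+1)\cdot 3$ — and I need to check, via a defect/Hall-type argument on the clique, that the \emph{union} over $B$ of the options is $<t-a-3$. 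I will defer the precise inequality to the computation, but note the target is $(t-5a-9)+2(a+2)+3(a+1)=t-7$ wait that's $t-5a-9+2a+4+3a+3=t-2$; hmm $t-2<t-a-3$ only if $a<-1$. I clearly need the sharper ``at most $2$ per active block'' bound. \emph{This balancing of the counts is precisely where the proof's real content lies, and it is the step I expect to be the main obstacle;} everything else (that $H_1$ is $K_t$-minor-free is Claim~\ref{clm3}; the final reduction is Lemma~\ref{lem-key} applied with $H_2$ a $(t-1)$-clique whose list assignment forces a rainbow $3(a+2)$-clique coloured by $d$-colours realizing some injective $\psi$) is routine and parallel to the proof of Theorem~\ref{thm2}.
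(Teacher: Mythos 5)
There is a genuine gap, and it comes from a misreading of the list assignment (LB'). For $v\in B$, the only $d$-colours in $L_\psi(v)$ are those in $\psi(N_A(v))$, i.e.\ exactly the colours that $\psi$ (and hence any extension $\phi$ of $\psi$) assigns to the $A$-neighbours of $v$; the ``leftover'' colour of each block, $\{d^{i_0}_1,d^{i_0}_2,d^{i_0}_3\}\setminus\psi(N_A(v)\cap A_i)$, is \emph{not} in $L_\psi(v)$. In your argument you put these leftover $d$-colours into the pool available to $v$, and this single error is what inflates all of your counts ($t+2a+2$, $t+a$, $t-2$, \dots) past the threshold and forces you into the per-block ``at most $2$ vertices'' bounds and Hall-type considerations that you never establish; you explicitly leave the decisive inequality unproved, so the proposal does not prove the claim.

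With the list read correctly the claim is immediate, and no per-colour-class multiplicity accounting on the clique $B$ is needed: if $\phi$ extends $\psi$, then every $d$-colour in $L_\psi(v)$ for $v\in B$ is already used on a neighbour of $v$ in $A$, so
$$\phi(v)\in\{b_i:i\in[t-5a-9]\}\cup\{c_i:i\in I(\psi)\}\cup\bigcup_{i\in[2a+3]\setminus I(\psi)}\{c^i_1,c^i_2,c^i_3\},$$
a set of only $(t-5a-9)+(a+2)+3(a+1)=t-a-4$ colours, while $B$ is a clique on $t-a-3$ vertices. This is exactly the paper's proof (and the exact analogue of Claim~\ref{clm2}, where likewise the colours of $A$ are simply unavailable to $B$ after extension); the construction is designed so that the $d$-colours in a $B$-vertex's list are all ``burned'' by its $A$-neighbours, which is the observation your proposal misses.
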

		\begin{proof}
			Assume that $H_1$ has an $L_\psi$-colouring $\phi$ which is an extension of $\psi$.
			Then $\phi(v)=\psi(v)$, for $v\in A$, and hence 
            $\phi(v)\in\{b_i:i\in [t-5a-9]\}\cup\{c_i:i\in I(\psi)\}\cup\bigcup_{i\in[2a+3]\backslash I(\psi)}\{c^i_1,c^i_2,c^i_3\}$ for every vertex $v\in B$. 
            Thus the $(t-a-3)$-clique induced by $B$ are coloured by $(t-5a-9)+(a+2)+3(a+1)=t-a-4$ colours, a contradiction.    
		\end{proof}

Let $H_2$ be a $(t-1)$-clique and $L'$ be the $\{1\star (t-5a-9),3\star(2a+3)\}$-list assignment defined as 
$$L'(v)=\bigcup_{j\in[2a+3]}\{d^j_1,d^j_2,d^j_3\}\cup\{b_i:i\in[t-5a-9]\},$$
for each vertex $v$ of $H_2$. 

Assume $\psi$ is a proper $L'$-colouring  of $H_2$. At least 
$5a+8$ vertices of $H_2$ are coloured by colours from $\bigcup_{j \in [2a+3]}\{d_1^j,d_2^j,d_3^j
\}$. Hence there is a $3(a+2)$-clique $K_\psi=\bigcup_{i\in[a+2]}\{{u}^i_{\psi,1},{u}^i_{\psi,2},{u}^i_{\psi,3}\}$ in $H_2$ such that for each $i\in[a+2]$ there exists $i_0\in[2a+3]$, $\psi(u^i_{\psi,j})=d^{i_0}_j$ and for $j\in[3]$.

By Claim~\ref{clm4}, $L_{\psi}$ is a $\psi|_{K_\psi}$-obstacle for $(H_1,H_1[A],\mathcal{C})$.

By Lemma~\ref{lem-key}, there is a $K_t$-minor-free graph $G$ that is not $\{1\star (t-5a-9),3\star(2a+3)\}$-choosable.
\end{proof}

Next, we prove Theorems~\ref{thm:kq} and \ref{thm-1} by using a construction similar to that used by Steiner~\cite{Ste}, who proved the following lemma using a probabilistic approach.

\begin{lemma}\label{lem-2}
For every $\varepsilon\in (0,1)$, there is $n_0 = n_0(\varepsilon)$ such that for every $n \ge n_0$, there exists a graph $H$ whose vertex set $V(H)$ can be partitioned into two disjoint sets $A$ and $B$ of size $n$, and such that the following properties hold:
\begin{itemize}
	\item[1.] Both $A$ and $B$ are cliques of $H$;
	\item[2.] Every vertex in $H$ has at most $\varepsilon n$ non-neighbors in $H$;
	\item[3.] For $t = \lceil (1 + 2\varepsilon)n \rceil$, $H$ does not contain $K_t$ as a minor.
\end{itemize}
\end{lemma}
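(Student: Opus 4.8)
The plan is to use the probabilistic method, following Steiner~\cite{Ste}. Fix a constant $\delta\in(0,\varepsilon)$, say $\delta=\varepsilon/2$. Let $A$ and $B$ be disjoint cliques on $n$ vertices each, and between them add a random bipartite graph in which each pair $\{u,v\}$ with $u\in A$ and $v\in B$ is an edge independently with probability $1-\delta$; let $H$ be the resulting random graph. Property~1 holds by construction. For Property~2, note that the non-neighbours of a vertex all lie in the opposite clique and their number is distributed as $\mathrm{Bin}(n,\delta)$ with mean $\delta n<\varepsilon n$, so a Chernoff bound together with a union bound over the $2n$ vertices shows that Property~2 fails with probability $o(1)$. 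The real work is Property~3, which I would establish by a union bound over all potential $K_t$-minor models.

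So suppose $H$ has a $K_t$-minor with $t=\lceil(1+2\varepsilon)n\rceil$ and branch sets $\mathcal Z$. Sort the branch sets into those contained in $A$, those contained in $B$, and the \emph{mixed} ones; counting how many vertices of $A$ (respectively $B$) are used gives $|\mathcal Z_A|+|\mathcal Z_{\mathrm{mix}}|\le n$ and $|\mathcal Z_B|+|\mathcal Z_{\mathrm{mix}}|\le n$, so $|\mathcal Z_A|,|\mathcal Z_B|\ge t-n\ge 2\varepsilon n$, while the same counting of $\sum_Z|Z|\le 2n$ shows at most $2n-t$ branch sets have size $\ge 2$, hence at least $2(t-n)\ge 4\varepsilon n$ branch sets are singletons. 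The point that makes the problem tractable is that $A$ and $B$ are cliques: a mixed branch set contains a vertex of $A$ and a vertex of $B$, so it is automatically adjacent in $H$ to every other branch set, and thus the only adjacencies that are not ``for free'' are those between a branch set contained in $A$ and a branch set contained in $B$. Since at least $4\varepsilon n$ branch sets are singletons, at least $2\varepsilon n$ of them lie in $A$ or at least $2\varepsilon n$ lie in $B$; by symmetry suppose a set $S$ of at least $2\varepsilon n$ singleton branch sets lies in $A$. Then every $u\in S$ has a neighbour in every branch set $Z\in\mathcal Z_B$, and $|\mathcal Z_B|\ge 2\varepsilon n$.

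Finally I would union bound. The number of choices for the vertex set $S$ is at most $2^n$, the number of choices for the family $\mathcal Z_B$ of pairwise disjoint nonempty subsets of $B$ is at most $(n+1)^n$, and there is a symmetric case, for a total of $e^{O(n\log n)}$ configurations. For a fixed configuration, the events ``$u$ has a neighbour in $Z$'' over $u\in S$ and $Z\in\mathcal Z_B$ depend on pairwise disjoint sets of random edges (distinct $u$ give disjoint edge sets, and disjoint $Z$ give disjoint edge sets), so they are mutually independent and the probability they all hold equals $\bigl(\prod_{Z\in\mathcal Z_B}(1-\delta^{|Z|})\bigr)^{|S|}$. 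Because $\mathcal Z_B$ consists of at least $2\varepsilon n$ pairwise disjoint nonempty subsets of the $n$-set $B$, at least $\varepsilon n$ of them have size at most $1/\varepsilon$, so $\prod_{Z\in\mathcal Z_B}(1-\delta^{|Z|})\le(1-\delta^{1/\varepsilon})^{\varepsilon n}$ and the configuration survives with probability at most $(1-\delta^{1/\varepsilon})^{2\varepsilon^2 n^2}=e^{-\Omega(n^2)}$, where the implied constant depends only on $\varepsilon$. Since $e^{O(n\log n)}\,e^{-\Omega(n^2)}\to 0$, for all large $n$ the random graph $H$ has Properties~1--3 simultaneously with positive probability, which proves the lemma.

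I expect the main obstacle to be the structural step in the middle paragraph: recognising that, because $A$ and $B$ are cliques, all constraints involving mixed branch sets can be discarded, and then extracting from an arbitrary minor model a linear-sized set of singleton branch sets on one side together with a linear-sized family of disjoint branch sets on the other. This reduction is exactly what gives the crude union bound enough room, since one needs the per-configuration survival probability $e^{-\Omega(n^2)}$ to dominate the $e^{O(n\log n)}$ configurations, and the $n^2$ in the exponent comes precisely from having $\Omega(n)$ singletons each imposing $\Omega(n)$ independent constraints of probability bounded away from $1$.
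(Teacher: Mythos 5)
Your proposal is correct, and it takes essentially the approach the paper intends: the paper does not prove Lemma~\ref{lem-2} itself but attributes it to Steiner~\cite{Ste}, whose argument is the same probabilistic construction (two $n$-cliques joined by a dense random bipartite graph, followed by a first-moment/union-bound over candidate minor models, exploiting that only adjacencies between branch sets confined to opposite sides are nontrivial). The structural reduction you highlight—many singleton branch sets on one side, each needing a neighbour in each of linearly many disjoint branch sets on the other side, giving an $e^{-\Omega(n^2)}$ per-configuration probability against $e^{O(n\log n)}$ configurations—is exactly the mechanism that makes the cited proof work.
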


\thmkq*
		
\begin{proof}
		Let $\varepsilon\in (0,1)$ and $q\in\mathbb{N}$ be given. Pick some $\varepsilon'\in (0,1)$ such that $\frac{2-q\varepsilon'}{1+2\varepsilon'}\ge 2- \frac \varepsilon 2 $.
		Let $n_0 = n_0(\varepsilon') \in \mathbb{N}$ be as in Lemma~\ref{lem-2}, and define $t_0:=\max\{\lceil(1+2\varepsilon')n_0\rceil,\lceil \frac 6 {\varepsilon}\rceil \}.$ 
		Let $t \ge t_0$ be any given integer. Define $n:=\lfloor\frac{t}{1+2\varepsilon'}\rfloor\ge n_0$ and then $t\ge (1 + 2\varepsilon')n$.
		
		Applying Lemma~\ref{lem-2}, there exists a graph $H$ whose vertex set is partitioned into two sets $A$ and $B$ of size $n$, such that both $A$ and $B$ form cliques in $H$, every vertex in $H$ has at most $\varepsilon' n$ non-neighbors, and $H$ is $K_t$-minor-free.
		
		Let $X_1,X_2,\dots,X_q,Y_1,Y_2,\dots,Y_q$ be pairwise disjoint subsets of $\mathbb{N}$, with $|X_j|=k_j,|Y_j|=\varepsilon'n$ for each $1\le j\le q$. For each injection $c$ from vertices in $A$ to $X_1\cup X_2\cup \cdots\cup X_q$,
		let $H_c$  be a copy of $H$ with the vertex set $A_c\cup B_c$ and $G$ be a graph obtained from all copies of $H$ by identifying the different copies of $v\in A$ into a single vertex for each vertex $v\in A$. Denote the vertex set of $G$ by $A\cup \bigcup_c B_c$. Since $H$ is $K_t$-minor-free and the set $A$ forms a clique of size $n$, $G$ is $K_t$-minor-free by repeated application of Lemma~\ref{lem-3}.
		
		Consider an assignment $L:V(G)\rightarrow 2^\mathbb N$ as follows: For every vertex $x\in A$, we define $L(x):=\bigcup_{j=1}^q X_j$, and for every vertex $y\in B_c$ for some injection $c$ from vertices in $A$ to $X_1\cup X_2\cup \cdots\cup X_q$, define 
		$$L(y):=\bigcup_{j=1}^q (X_j\cup Y_j)\backslash\bigcup_{x\in A, xy\notin E(G)} c(x).$$

  Let 
  $C_1,C_2,\dots, C_q$ where $C_j=X_j \cup Y_j$ for $1\le j\le q$, and $\mathcal{C}=\{C_1,C_2, \dots, C_q\}$. Let $\lambda = \{k_1,k_2,\cdots,k_q\}$. 
  Now we show that $L$ is a $(\lambda, \mathcal{C})$-list assignment of $G$.   
		For each $1\le j\le q$, $|L(v)\cap C_i|= k_i$ if $v\in A$, and $|L(v)\cap C_i|\ge k_i-\varepsilon'n+\varepsilon'n=k_i$ if $v\in V(G)\backslash A$. 
  
  It remains to prove  that $G$ is not $L$-colourable.
			Assume to the contrary that there exist an $L$-colouring $\phi$ of $G$. Let $c$ be the restriction of $\phi$ to $A$. Then $c$ is an injection $c$ from vertices in $A$ to $X_1\cup X_2\cup \cdots\cup X_q$.
		Consider the colouring  restricted to $H_c$. Note that  $|\bigcup_{v\in V(H_c)}L(v)|=|\bigcup_{j=1}^q X_j\cup Y_j|=\sum_{j=1}^q k_j+q\cdot\varepsilon'n $. 
		Since $\frac{2-q\varepsilon'}{1+2\varepsilon'}\ge 2- \frac \varepsilon 2 $ and $n=\lfloor\frac{t}{1+2\varepsilon'}\rfloor\ge \frac{t}{1+2\varepsilon'}-1$,
		\[
		\sum_{j=1}^q k_j\le (2-\varepsilon)t
		= (2-\frac \varepsilon 2)t-\frac \varepsilon 2 t
		\le \frac{2-q\varepsilon'}{1+2\varepsilon'}t-\frac \varepsilon 2 t
		\le (2-q\varepsilon')(n+1)-\frac \varepsilon 2 t. 
		\]
		Since $t\ge t_0\ge \frac 6 \varepsilon$, 
		$$|\bigcup_{v\in V(H_c)}L(v)|=\sum_{j=1}^q k_j+q\varepsilon'n\le  (2-q\varepsilon')(n+1)-\frac \varepsilon 2 t +q\varepsilon'n< 2(n+1)-\frac \varepsilon 2 t\le 2n-1.$$
		Since $|V(H_c)|=2n$ and $|\bigcup_{v\in V(H_c)}L(v)|\le 2n-1$, there are two vertices $x,y$ in $H_c$ for which $\phi(x)=\phi(y)$. Since $A$ and $B_c$ form cliques in $H_c$, we may assume that $x \in A$ and $y \in B_c$. Now $\phi(x)=\phi(y)$ implies that  $xy\notin E(G)$. But then $\phi(x)=c(x) \notin L(y)$, a contradiction. 
		\end{proof}
		
\thmmult*
		
\begin{proof}
Let $\varepsilon\in (0,1)$ be given. Pick some $\varepsilon'\in (0,1)$ such that $\frac{2-\varepsilon'}{1+2\varepsilon'}\ge 2-\frac \varepsilon 2$.
		Let $n_0 = n_0(\varepsilon') \in \mathbb{N}$ be as in Lemma~\ref{lem-2}, and define $t_0:=\max\{\lceil(1+2\varepsilon')n_0\rceil, \lfloor \frac 6\varepsilon\rfloor\}.$ 
		Now, let $t \ge t_0$ be any given integer. Define $n:=\lfloor\frac{t}{1+2\varepsilon'}\rfloor\ge n_0$ and then $t \ge (1 + 2\varepsilon')n$. 
		
		By Lemma~\ref{lem-2}, there exists a graph $H$ whose vertex set is partitioned into two non-empty sets $A$ and $B$ of size $n$, such that both $A$ and $B$ form cliques in $H$, every vertex in $H$ has at most $\varepsilon' n$ non-neighbors, and $H$ is $K_t$-minor-free.
		
		For any fixed positive integer $m$, let $D$ be the family of all $m$-subsets of $[2nm-1]=\{1,2,\dots,2nm-1\}$. 
  For each injection  $c$ from vertices in $A$ to $D$, let $H_c$  be a copy of $H$ with the vertex set $A_c\cup B_c$ and $G$ be a graph obtained from all copies of $H$ by identifying the different copies of $v\in A$ into a single vertex for each vertex $v\in A$. Denote the vertex set of $G$ by $A\cup \bigcup_{c} B_c$. Since $H$ is $K_t$-minor-free and the set $A$ forms a clique of size $n$, $G$ is $K_t$-minor-free by repeated application of Lemma~\ref{lem-3}.
		
Consider an assignment $L:V(G)\rightarrow 2^\mathbb N$ to vertices in $G$ as follows: For every vertex $x\in A$, we define $L(x):=[2nm-1]$, and for every vertex $y\in B_c$ for some injection $c$ from vertices in $A$ to $D$, define 
$$L(y):=[2nm-1]\backslash\bigcup_{x\in A, xy\notin E(G)} c(x).$$ Recall that every vertex in $H$ has at most $\varepsilon' n$ non-neighbors. So $|L(v)|\ge 2nm-1-\varepsilon' nm$ for every vertex $v\in V(G)$.

It remains to prove that $G$ does not admit a $m$-fold $L$-colouring, which will then prove that $G$ is not $((2-\varepsilon')nm-1,m)$-choosable. Assume to the contrary that there exists an $m$-fold $L$-colouring $\phi$ of $G$. Let $c$ be the restriction of $\phi$ to $A$, and then $c$ is an injection from vertices in $A$ to $D$.
		
		Consider the colouring  restricted to the subgraph induced by $H_c:=A\cup B_c$ in $G$. Since $|V(H_c)|=2n$ and $\bigcup_{v\in V(H_c)}L(v)=[2nm-1] $, there are two vertices $x,y$ in $H_c$ which have $\phi(x)\cap \phi(y)\neq \emptyset$. Since both of $A$ and $B_c$ form a clique in $H_c$, there exists $x\in A$, $y\in B_c$ and a colour $i\in[2nm-1]$ such that $xy\notin E(G)$ and $i\in \phi(x)\cap \phi(y)$. Thus $i \in c(x)$ and hence $i \notin L(y)$, a contradiction. 
		
		Since $t\ge t_0\ge \frac 6\varepsilon$,
		\begin{align*}
			(2-\varepsilon')nm-1
			&=(2-\varepsilon') \left\lfloor\tfrac{t}{1+2\varepsilon'}\right\rfloor m-1\\
			&>(2-\varepsilon')\left(\tfrac{t}{1+2\varepsilon'}-1\right)m-1\\
			&\ge (2-\frac \varepsilon 2)tm-(2m-\varepsilon'm+1)\\
			&\ge (2-\varepsilon)tm. 
		\end{align*}
		Hence, we conclude that $G$ is a $K_t$-minor-free graph that is not $((2-\varepsilon )tm,m)$-choosable.
		\end{proof}

\subsection*{Acknowledgements} This research was partially completed at the \href{https://www.matrix-inst.org.au/events/structural-graph-theory-downunder/}{Structural Graph Theory Downunder} workshop at the Mathematical Research Institute MATRIX (November 2019).  Xuding Zhu is partially supported by NSFC Grants 11971438, U20A2068, and ZJNSFC  grant LD19A010001.

\small 

\end{document}